\title{An analogue of Pillai's theorem for continued fraction
  normality and an application to subsequences}
\author[1]{Satyadev Nandakumar}
\author[1]{Subin Pulari}
\author[1]{Prateek Vishnoi}
\author[1]{Gopal Viswanathan}
\affil[1]{
  Department of Computer Science and Engineering\\
  Indian Institute of Technology Kanpur,
  Kanpur, Uttar Pradesh, India.
}
\affil[]{\{\textit{satyadev,subinp,pratvish,gopalv}\}@cse.iitk.ac.in}
\newcommand{\PROOF}{\begin{proof}}
\newcommand{\QED}{\end{proof}}
\newcommand{\N}{\mathbb{N}}
\newcommand{\R}{\mathbb{R}}
\newcommand{\denominator}{\text{denominator}}
\newcommand{\numerator}{\text{numerator}}
\theoremstyle{plain}
\newtheorem{theorem}{Theorem}[section]
\newtheorem{lemma}[theorem]{Lemma}
\theoremstyle{definition}
\newtheorem{definition}[theorem]{Definition}
\begin{document}

\maketitle

\begin{abstract}
  We show that two notions of continued fraction normality, one where
  overlapping occurrences of finite patterns are counted as distinct
  occurrences, and another where only disjoint occurrences are counted
  as distinct, are identical. This equivalence involves an analogue of
  a theorem due to S. S. Pillai \cite{Pillai40} for base-$b$
  expansions. The proof requires techniques which are fundamentally
  different, since the continued fraction expansion utilizes a
  countably infinite alphabet, leading to a non-compact space.

  Utilizing the equivalence of these two notions, we provide a new
  proof of Heersink and Vandehey's recent result that selection of
  subsequences along arithmetic progressions does not preserve
  continued fraction normality \cite{HeersinkVandehey16}.
\end{abstract}

\section{Introduction}
Any irrational real number in $x \in (0,1)$ has a unique continued
fraction expansion
\begin{align*}
x &= 0 + \cfrac{1}{a_1+\cfrac{1}{a_2 + \cfrac{1}{a_3 +
			\cfrac{1}{\ddots}}}}, 
\end{align*}
where $a_i \in \N^+$, which we denote by the sequence
$[0;a_1,a_2,\dots]$. Since we work exclusively in the unit interval,
we ignore the integer part of the number in the following
discussion. A continued fraction is said to be \emph{normal} if the
asymptotic frequency of any block of integers $w$ is equal to
$\gamma(C_w)$, where $\gamma$ is the Gauss measure and $C_w$ is the
set of all continued fractions with $w$ as the prefix (definitions are
provided in the following section.) In this notion, when we count
occurrences of $w$, we count overlapping occurrences of $w$ as
distinct occurrences.

Another natural notion of normality counts only disjoint occurrences
of any block $w$ as distinct occurrences, and requires the asymptotic
disjoint frequency to be $\gamma(C_w)$. In combinatorial arguments,
this latter notion is easier to study.

It is not obvious that these notions are equivalent. In the base-$b$
expansion of reals in the unit interval, where $b \in \N$ and
$b \ge 2$, these two notions, namely, normality via overlapping
frequencies and via disjoint frequencies, coincide. The fact that
overlapping block normality implies disjoint block normality follows
from the Postnikov--Piateskii-Shapiro criterion\footnote{also known as
	the ``hot-spot
	theorem''}\cite{Postnikov60},\cite{PostnikovPiatetskii57}, which is
also known to hold for the continued fraction expansion. The fact that
disjoint occurrence normality implies overlapping block normality is a
consequence of Pillai's Theorem \cite{Pillai40}. The proof of Pillai's
theorem relies on properties of the finite alphabet used in the
base-$b$ expansions.  It is not immediately clear whether this
generalizes to continued fraction expansions.

We establish that these notions indeed coincide in the case of
continued fraction normality. The proofs employ very different
techniques from the classical base-$b$ expansions, since the set of
entries allowed at any position in the expansion is countably
infinite, leading to a non-compact topological space. We prove this
result for continued fractions with a technique which does not require
compactness.

Further, we demonstrate the utility of our result by giving an
alternative proof of a recent result by Heersink and Vandehey
\cite{HeersinkVandehey16} that if $[0;a_1,a_2,\dots]$ is a continued
fraction normal, and $(i_1, i_2, \dots)$ is any non-trivial arithmetic
progression, then $[0;a_{i_1},a_{i_2},\dots]$ is \emph{not} continued
fraction normal. This result contrasts with the behavior of base-$b$
normality, where such a subsequence is always normal
\cite{Wall49}. Heersink and Vandehey employ an ergodic-theoretic
argument whereas our proof is combinatorial.

\section{Preliminaries}

Let $\N$ denote the set of natural numbers $\{0,1,\dots\}$, and for a
non-negative integer $k$, let $\N_{>k}$ represent the set of numbers
greater than $k$. For a finite sequence of numbers $w$, we denote its
length by $|w|$, and the $i^{\text{th}}$ coordinate of $w$ is denoted
by $w_i$, where $1 \le i \le |w|$. The set of finite sequences of
numbers of length $m$ are represented by $\N^m$, and the set of
infinite sequences of numbers, by $\N^\infty$.

We consider continued fraction expansions of the form
$[0;a_1, a_2, \dots]$ where $a_i \in \N_{>0}$. For a finite sequence
of numbers $w$, we say that $w$ is a prefix of the (finite or
infinite) continued fraction $[0;a_1, a_2, \dots]$ if $a_i=w_i$ for
$1 \le i \le |w|$.  If $w$ is a prefix of the continued fraction
expansion of $x$, we write $w \sqsubseteq x$.

We work in the probability space $(\N^\infty, \mathcal{B}, \gamma)$
where $\mathcal{B}$ is the Borel $\sigma$-algebra generated by the
cylinders of the form
$$C_w = \{ x \in [0,1] \mid w \sqsubseteq x \},$$
and $\gamma$ is the Gauss measure defined by
\begin{align}
\gamma(A) = \frac{1}{\ln 2} \int_A \frac{1}{1+x} ~dx, \qquad A \in
\mathcal{B}. 
\end{align}
If $S=\{w_1,w_2,\dots\}$ is a set of strings, then for any $x \in (0,1)$, by $x \in C_{S}$
we mean that $x \in C_{w_i}$ for some $i \in \N_{>0}$.

To study the block statistics of continued fraction normals, we
introduce the left-shift transformation $T: \N^\infty \to \N^\infty$
defined by
$$T([0;a_1,a_2,\dots]) = [0;a_2,a_3,\dots]$$
which can be identified with the transformation $\hat{T}:(0,1) \to
(0,1)$ defined by
$$\hat{T}(x) = \frac{1}{x} \mod 1 = \frac{1}{x} - \left\lfloor
\frac{1}{x} \right\rfloor,$$ where $\lfloor y \rfloor$ is the
greatest integer lesser than or equal to $y$. The left-shift
transformation $T$ is strongly mixing, hence \emph{a fortiori} ergodic
and measure-preserving, with respect to the Gauss measure.

\begin{definition}
	A real number $x \in [0,1]$ with continued fraction expansion $[0;
	a_1, a_2, \dots]$ is said to be \emph{continued fraction normal} if
	for every $w \in \N^*$,
	\begin{equation}
	\lim_{n \to \infty} \frac{|\{i \mid T^ix \in
		C_w , 0 \le i \le n  \}|}{n} = \gamma(C_w).
	\end{equation}
\end{definition}

In other words, a real number $x$ is continued fraction normal if the
asymptotic frequency of all blocks of integers $w$ is equal to the
Gauss measure of the cylinder $C_w$. Note that $T^i x \in C_w$ if and
only if $w$ is a prefix of the continued fraction expansion of $x$
(mod 1). In this notion, we consider overlapping occurrences of $w$ as
distinct - for example, if $w$ is $[1,1]$, then it occurs twice in the
subsequence $[1,1,1]$. Since $T$ is ergodic with respect to the Gauss
measure, by the ergodic theorem, it follows that the set of continued
fraction normals has Gauss measure 1.\footnote{Since the Gauss measure
	is absolutely continuous with respect to the Lebesgue measure, the
	set of continued fraction normals has Lebesgue measure 1 as well.}

\section{An Analogue of Pillai's theorem}

In this section, we prove the analogue of Pillai's theorem for
continued fractions.  Let $x \in (0,1)$.

Suppose that the limiting frequency of disjoint blocks of any finite
length string $\mathnormal{s}= a_1a_2...a_k$,
$a_i \in \mathbb{N}, k \in \mathbb{N}$ in the continued fraction
expansion of $\mathnormal{x}$ exists, and is equal to the Gauss
measure of the cylinder set of $\mathnormal{s}$, \emph{i.e.}
\begin{align}
\lim_{n\to\infty} \frac{|\{i \mid T^{ki}x \in C_s , 0 \leq i \leq
	\frac{n}{k} \}|} {n/k} = \gamma(C_s).
\end{align}

The analogue of Pillai's theorem for continued fractions is the
assertion that the limiting sliding block frequency of any finite
length string $s$ of positive integers during a long run is equal to
the Gauss measure of its cylinder set.

\begin{theorem}[Analogue of Pillai's Theorem for Continued Fractions]
	\label{th:pillaistheorem}
	Let $\mathnormal{x} \in (0,1)$. If for all 
	$k \in \mathbb{N}$, and for all strings $s \in \mathbb{N}^k_{>0}$, if we
	have
	\begin{align}
	\lim_{n\to\infty} \frac{|\{i \mid T^{ki}x \in C_s , 0 \leq i \leq
		\frac{n}{k} \}|} {n/k} =
	\gamma(C_s),
	\end{align}
	then, for any finite string $s \in \N^*_{>0}$,
	\begin{align}
	\lim_{n\to\infty} \frac{|\{i \mid T^{i}x \in C_s , 0 \leq i \leq n
		\}|} {n} = \gamma(C_s).
	\end{align}
\end{theorem}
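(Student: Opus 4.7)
The plan is to reduce the sliding-frequency problem for a length-$k$ string $s$ to a block-counting problem at scale $L \ge k$, and then let $L \to \infty$. Partition positions $\{0,1,\dots\}$ into consecutive blocks of length $L$ and let $t^{(j)} := (a_{Lj+1}, \dots, a_{L(j+1)}) \in \N^L_{>0}$. Write $\#_s(t)$ for the number of sliding occurrences of $s$ inside a length-$L$ block $t$. Routine boundary counting gives
\begin{align*}
\left| |\{l \le n : T^l x \in C_s\}| - \sum_{j < n/L} \#_s(t^{(j)}) \right| \le (k-1)\frac{n}{L} + O(L),
\end{align*}
since every discrepancy arises from an occurrence of $s$ straddling two consecutive blocks, and each boundary supports at most $k-1$ such occurrences.

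Next, the hypothesis applied with block length $L$ and string $t$ yields $\lim_n |\{j \le n/L : T^{Lj}x \in C_t\}|/(n/L) = \gamma(C_t)$ for every $t \in \N^L_{>0}$. Granting for the moment the interchange of this limit with the infinite sum over $t$,
\begin{align*}
\lim_n \frac{1}{n/L} \sum_{j < n/L} \#_s(t^{(j)}) \;=\; \sum_{t \in \N^L_{>0}} \#_s(t)\, \gamma(C_t) \;=\; (L-k+1)\,\gamma(C_s),
\end{align*}
the last equality following from $T$-invariance of $\gamma$ summed over the $L-k+1$ possible starting positions of $s$ inside a length-$L$ block. Combined with the boundary estimate, this forces both $\limsup_n$ and $\liminf_n$ of $|\{l \le n : T^l x \in C_s\}|/n$ to lie within $(k-1)/L$ of $(1 - \tfrac{k-1}{L})\gamma(C_s)$, and letting $L \to \infty$ collapses them both onto $\gamma(C_s)$, as required.

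The substantive step is justifying the interchange of $\lim_n$ with the infinite sum over $\N^L_{>0}$; this is precisely where non-compactness of the alphabet distinguishes our setting from the base-$b$ Pillai theorem, in which the corresponding sum is finite. The key is the $k=1$ instance of the hypothesis, which is in fact a \emph{sliding}-frequency statement for single partial quotients: $\lim_n |\{l \le n : T^l x \in C_m\}|/n = \gamma(C_m)$ for every $m \in \N_{>0}$, where $C_m$ denotes the cylinder of the singleton string. By finite additivity and complementation this upgrades to a uniform tail estimate $\lim_n |\{l \le n : T^l x \in \bigcup_{m > N_0} C_m\}|/n = \gamma(\bigcup_{m > N_0} C_m)$, which vanishes as $N_0 \to \infty$. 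I would then truncate the $t$-sum at the finite set $\{t \in \N^L_{>0} : \max_i t_i \le N_0\}$, where commutation is automatic, and bound the tail uniformly in $n$: since $\#_s(t) \le L$, a union bound over the $L$ offsets inside each block controls the tail contribution by a constant (depending only on $L$) multiple of $\gamma(\bigcup_{m > N_0} C_m)$, which vanishes as $N_0 \to \infty$. This tail bound is the decisive ingredient that replaces the finite-alphabet assumption of Pillai's classical argument, and is the main obstacle the plan has to overcome.
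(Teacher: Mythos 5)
Your proposal is correct, but it takes a genuinely different route from the paper. The paper follows Maxfield's scheme: it writes the sliding frequency of $s$ \emph{exactly} as a series $\sum_p f_p(n)$, where $f_p$ counts occurrences inside blocks of length $2^{p-1}k$ not already counted, proves uniform convergence of the partial sums to interchange $\lim_n$ with the sum over $p$, and then invokes Lemma \ref{lem:limitlemma} to interchange $\lim_n$ with the infinite sum over all cylinders $C_{D_j}$ of a fixed length having $s$ at a fixed offset; that lemma is proved by a mass-conservation argument (truncate, and control the tail by playing the family $\{D_j\}$ off against its complementary family $\{E_j\}$, using that both the empirical frequencies and the Gauss measures sum to $1$). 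You instead use a single coarse blocking at scale $L$, accept an $O((k-1)/L)$ boundary error plus $O(L/n)$ edge effects, and recover the exact value only in the limit $L\to\infty$; and you handle the infinite-alphabet interchange not by mass conservation over complementary families but by a tail bound on large partial quotients extracted from the $k=1$ instance of the hypothesis (frequency of digits exceeding $N_0$ tends to $\gamma(\bigcup_{m>N_0}C_m)$, which is small), combined with $\#_s(t)\le L$ and a union bound over offsets; the Gauss-measure side of your truncation is automatic since the truncated sums increase to $(L-k+1)\gamma(C_s)$. What each approach buys: yours is more elementary and self-contained, avoiding both the dyadic bookkeeping and any general limit-interchange lemma, at the cost of an approximation argument and of using the $k=1$ hypothesis as a special input; the paper's route yields an exact identity and, more importantly, isolates the interchange step as Lemma \ref{lem:limitlemma} and its generalization Lemma \ref{lem:generallimitlemma}, which the paper reuses verbatim in the proof of Theorem \ref{thm:subsequence}, whereas your tail-estimate mechanism would have to be re-derived (or adapted) there. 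Do make explicit, when writing this up, the two small points your sketch leaves implicit: the count of straddling occurrences also needs the $O(L)$ correction for the final partial block, and the tail of the measure-side sum $\sum_{t:\max_i t_i>N_0}\#_s(t)\gamma(C_t)$ should be bounded (e.g.\ by $L^2\gamma(\bigcup_{m>N_0}C_m)$ via measure preservation) or dismissed by monotone convergence.
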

The proof of the above theorem requires the following lemma.  This
estimates, for any real with the right frequency of disjoint
$k$-length strings, the overlapping frequency of $k$-length strings
inside longer blocks. The proof of this lemma requires new techniques
which can deal with non-compact spaces, and carry out combinatorial
arguments where the number of possible patterns of any given length
is infinite.
\begin{lemma}
	\label{lem:limitlemma}
	Let $\mathnormal{x} \in (0,1)$. Suppose that for all
	$k \in \N_{>0}$, and all $s\in\mathbb{N}^k_{>0}$,
	\begin{align*}
	\lim_{n\to\infty} \frac{|\{i \mid T^{ki}x \in C_s , 0 \leq i \leq
		\frac{n}{k} \}|} {n/k} = \gamma(C_s).
	\end{align*}
	Let $s' \in \N^k_{>0}$ be any finite string and $p,q\in\mathbb{N}_{>0}$. Let $ D_1, D_2,..$
	denote any enumeration of all $(p+q+k)$ length strings such that
	$s'$ occurs in it at the position $(p+1)$.  \emph{i.e.}, for any
	$D_{j}=b_{1}b_{2}\dots b_{p+q+k}$ we have
	$b_{p+1}b_{p+2}\dots b_{p+1+k}=s'$. Then the following holds.
	\begin{align}
	\label{eqn:lemma}
	\lim_{n\to\infty}\sum_{j =1}^{\infty} \frac{|\{i \mid T^{(p+q+k)i}x \in
		C_{D_j} , 0 \leq i \leq n / (p+q+k)    \}|}{n/(p+q+k)} =
	\gamma(C_{s'})  
	\end{align}
\end{lemma}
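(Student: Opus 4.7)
The plan is to exploit that, for each fixed $n$, each position $i$ contributes to at most one $D_j$ in the inner sum, since the $C_{D_j}$ are disjoint cylinders. Writing $L = p+q+k$ and $a_j(n) = \frac{|\{i : T^{Li}x \in C_{D_j},\ 0 \le i \le n/L\}|}{n/L}$, the sum collapses into
\[
\sum_j a_j(n) \;=\; \frac{|\{i : T^{Li+p}x \in C_{s'},\ 0 \le i \le n/L\}|}{n/L},
\]
because $T^{Li}x \in C_{D_j}$ for some $j$ exactly when positions $Li+p+1,\dots,Li+p+k$ of $x$ spell out $s'$. Meanwhile, $T$-invariance of $\gamma$ together with disjointness of the $C_{D_j}$'s yield $\sum_j \gamma(C_{D_j}) = \gamma(T^{-p}C_{s'}) = \gamma(C_{s'})$, so the conclusion is morally just a matter of swapping the infinite sum with the limit in $n$. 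The only quantitative input will be the hypothesis applied with block length $L$ to each fixed $D_j$, which gives $\lim_n a_j(n) = \gamma(C_{D_j})$.

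For the liminf direction, I would truncate at any finite $J$: since $a_j(n) \ge 0$ and $\sum_{j \le J} a_j(n) \to \sum_{j \le J} \gamma(C_{D_j})$ as $n \to \infty$,
\[
\liminf_{n} \sum_j a_j(n) \;\ge\; \sum_{j \le J} \gamma(C_{D_j}),
\]
and sending $J \to \infty$ gives $\liminf_n \sum_j a_j(n) \ge \gamma(C_{s'})$.

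The main obstacle will be the matching upper bound, because the infinite tail of the $a_j(n)$'s cannot be dominated uniformly in $n$ --- this is precisely where non-compactness of the alphabet bites, and why the classical Pillai-type argument does not carry over. The key idea to finesse this is a complement argument: enumerate all $L$-length strings $E_1, E_2, \dots$ which do \emph{not} have $s'$ at position $p+1$, and let $b_l(n)$ be the analogous ratios. Since every position $i \in \{0,\dots,\lfloor n/L\rfloor\}$ yields an $L$-length prefix of $T^{Li}x$ that either matches some $D_j$ or some $E_l$, we have the trivial partition identity
\[
\sum_j a_j(n) \,+\, \sum_l b_l(n) \;=\; \frac{\lfloor n/L \rfloor + 1}{n/L} \;\longrightarrow\; 1.
\]
Applying the same truncation argument to the $b_l(n)$'s with target $\sum_l \gamma(C_{E_l}) = 1 - \gamma(C_{s'})$ yields $\liminf_n \sum_l b_l(n) \ge 1 - \gamma(C_{s'})$, so the partition identity gives $\limsup_n \sum_j a_j(n) \le \gamma(C_{s'})$. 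Combined with the liminf bound, this proves the lemma.
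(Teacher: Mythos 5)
Your proposal is correct and follows essentially the same route as the paper: the lower bound comes from truncating to finitely many $D_j$ and using $\sum_j\gamma(C_{D_j})=\gamma(C_{s'})$ (measure preservation) together with the hypothesis applied to each $L$-length block, and the upper bound comes from the complementary family $E_l$ plus the partition identity, exactly the paper's $H_1,T_1,H_2,T_2$ bookkeeping rephrased in liminf/limsup form.
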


We first assume the above lemma and prove Theorem
\ref{th:pillaistheorem}. A proof of Lemma \ref{lem:limitlemma} is
given at the end of this section. The proof of Theorem
\ref{th:pillaistheorem} given below is an adaptation of the technique
employed in \cite{maxfield1952short} by John E. Maxfield in the
setting of finite alphabet, to our setting of countably infinite
alphabet.

\begin{proof}[Proof of Theorem \ref{th:pillaistheorem}]
	We know that the sliding block frequency of any finite length string
	$s=a_1a_2a_3\dots a_k\in \N^k_{>0}$ for large enough $n$ can be written as:
	\begin{align}
	\label{eq:pillaistheorem_one}
	\frac{|\{i \mid T^{i}x \in C_s , 0 \leq i \leq (n-k) \}|} {n} =
	f_1(n) + f_2(n) +f_3(n) \dots
	f_{(1+\left \lfloor\log_2  \frac{n}{k} \right \rfloor)}(n) + {\frac{o(n)}{n}}
	\end{align}
	where $f_p(n)$ calculates the frequency of $s$ in blocks of length
	$2^{p-1}k$ that are not counted in previous functions (i.e
	$f_1(n)$,$f_2(n)$,$\dots$,$f_{p-1}(n)$), and is defined by
	\begin{align*}
	f_{p}(n)= \begin{cases}
	\sum_{j=1}^{k-1} \frac{|\{i~ \mid~ T^{(2^{p-1})ki}x ~\in~ C_{S_j}
		~	,~ 0 ~\leq ~i ~\leq ~n/2^{p-1}k \}|} {n},\text{ if } p \leq
	1+\left \lfloor\log_2(n/k)\right \rfloor\\ 
	0, \text{ otherwise}
	\end{cases}
	\end{align*}
	where $S_j$ is an infinite collection of $2^{(p-1)} k$ length blocks
	s.t $s$ occurs in it at starting position $(2^{(p-2)}k - j +1)^{th}$
	position i.e $S_{j}$ is the set of strings of the form,
	$u\ a_1a_2 \dots a_k\ v$ where $u$ is some string of $2^{p-2}k-j$
	positive integers, and $v$ is some string of $2^{p-2}k-k+j$ positive
	integers.
	
	Taking limits on both sides of (\ref{eq:pillaistheorem_one}) we get,
	$$ \lim_{n\to\infty} \frac{|\{i \mid T^{i}x \in C_s , 0 \leq i \leq n \}|}
	{n} = \lim_{n\to\infty} \sum_{i=1}^{\infty} f_{i}(n).$$ We know show
	that the sequence $\langle \sum_{i=1}^{m} f_i(n) \rangle_{m \in \N}$
	is uniformly convergent. This will enable us to interchange the limit
	and the summation in the previous equation.
	
	Fix $\varepsilon>0$ arbitrarily small. We show that there is a $t$
	such that for all $n$, $| \sum_{i =t}^{\infty} f_i(n)| < \epsilon$.
	Now
	$$f_t(n) \leq \frac{n / 2^{t-1}k}{n}.(k-1) =
	\Big(1-\frac{1}{k}\Big)\frac{1}{2^{t-1}}$$
	Hence
	$$ \sum_{i =t}^{\infty}f_t(n) \leq \sum_{i
		=t}^{\infty}\Big(1-\frac{1}{k}\Big)\frac{1}{2^{t-1}} =
	\Big(1-\frac{1}{k}\Big) \frac{1}{2^{t-2}}$$ 
	We can make the above term smaller as much as we need by choosing a
	large $t$. Thus the sequence $\langle \sum_{i=1}^{m} f_i(n) \rangle_{m
		\in \N}$ is uniformly convergent.

	So,
	$$ \lim_{n\to\infty} \frac{|\{i \mid T^{i}x \in C_s , 0 \leq i \leq n \}|}
	{n} = \sum_{i=1}^{\infty} \lim_{n\to\infty} f_i(n).$$
	
	Now it is given that,
	\begin{align}
	\label{eq1}
	\lim_{n\to\infty} \frac{|\{i \mid T^{ki}x \in C_s , 0 \leq i \leq
		\frac{n}{k} \}|} {n/k} = \gamma(C_s) 
	\end{align}	
	and thus,	
	$$ \lim_{n\to\infty} f_1(n) = \lim_{n\to\infty}\frac{|\{i \mid T^{ki}x \in
		C_s , 0 \leq i \leq n/k \}|} {n} =
	\frac{\gamma(C_s)}{k}. $$
	Now, when $p \leq 1+ \left\lfloor\log_2(\frac{n}{k}) \right\rfloor$,
	\begin{align*}
	\lim_{n\to\infty} f_p(n)
	=& \lim_{n\to\infty} \sum_{j =1}^{k-1}
	\frac{|\{i \mid T^{({2^{p-1})}ki}x \in C_{S_j} , 0 \leq i \leq n/2^{p-1}k
		\}|} {n}\\	
	=&  \sum_{j =1}^{k-1}\lim_{n\to\infty}
	\frac{|\{i \mid T^{2^{p-1}ki}x \in C_{S_j} , 0 \leq i \leq n/2^{p-1}k \}|}
	{n}.
	\end{align*}
	The interchange between the sum and the limit is valid since there are
	only finitely many terms in the sum.
	
	Now we derive an expression for 
	$$ \lim_{n\to\infty} \frac{|\{i \mid T^{2^{p-1}ki}x \in C_{S_j} , 0 \leq i
		\leq n/2^{p-1}k \}|} {n}.  $$ 
	\\Let $D_{1},D_{2},D_{3}\dots$ be any enumeration of $S_{j}$. Now observe that,
	$$
	\sum_{m=1}^{\infty} |\{i \mid T^{2^{p-1}ki}x \in C_{D_{m}} , 0 \leq i \leq
	n/2^{p-1}k \}|
	= |\{i \mid T^{2^{p-1}ki}x \in C_{S_j} , 0 \leq i \leq n/2^{p-1}k \}|.
	$$
	In the proof by Maxfield, the expression on the right side is
	calculated directly by counting. Since we have a countably infinite
	alphabet, we have to adopt a different approach at this step. Since
	equation (\ref{eq1}) holds, applying Lemma
	\ref{lem:limitlemma},  we have,
	\begin{align*}
	\lim_{n\to\infty} \frac{|\{i \mid T^{2^{p-1}ki}x \in C_{S_j} , 0 \leq i
		\leq n/2^{p-1}k \}|} {n/2^{p-1}k} &=  	\lim_{n\to\infty}
	\sum_{m=1}^{\infty}\frac{
		|\{i \mid T^{2^{p-1}ki}x \in C_{D_{m}}
		, 0 \leq i \leq n/2^{p-1}k \}|}
	{n/2^{p-1}k} 
	\\ &=\gamma(C_{s}) 
	\end{align*}
	And therefore we get that
	$$
	\lim_{n\to\infty} \frac{|\{i \mid T^{2^{p-1}ki}x \in C_{S_j} , 0 \leq i \leq
		n/2^{p-1}k \}|} {n} = \frac{\gamma(C_{s})}{2^{p-1}k} 
	$$
	Thus,
	$$ \lim_{n\to\infty} f_p(n) = \lim_{n\to\infty} \sum_{j=1}^{k-1}
	\frac{|\{i \mid T^{2^{p-1}ki}x \in C_{S_j} , 0 \leq i \leq n/2^{p-1}k \}|}
	{n} = \frac{\gamma(C_s)}{2^{p-1}k}.(k-1)$$  
	Summing over all $f_i$, we obtain
	\begin{align*}
	\lim_{n\to\infty} \frac{|\{i \mid T^{i}x \in C_s , 0 \leq i \leq n \}|} {n}
	&= \sum_{i =1}^{\infty} \lim_{n\to\infty} f_i(n)\\
	&= \frac{\gamma(C_s)}{k} + (k-1)\gamma(C_s) \Big[\frac{1}{2k} +
	\frac{1}{2^2k}+ \dots \Big] \\
	&= \gamma(C_s),
	\end{align*}
	establishing the result.
\end{proof}

We now prove Lemma \ref{lem:limitlemma}, emphasizing our means of
dealing with countably infinite alphabets. 

\begin{proof}[Proof of Lemma \ref{lem:limitlemma}]
	Assume that $S$ denotes the collection of all $p+q+k$ length strings
	of natural numbers such that $s'$ occurs in it at starting at
	position $p+1$. We know that $S$ is a countably infinite set.
	Assume $\langle D_1, D_2, \dots \rangle$ be an enumeration of the
	elements of $S$.  Now,
	\begin{align}
	\label{eqn:1}
	\sum_{j = 1}^{\infty} \gamma(C_{D_{j}}) = \gamma(C_{s'})
	\end{align} 
	holds true since the Gauss transformation is measure preserving (see
	\cite{einsiedler2013ergodic}).
	
	First we show that for any $\epsilon > 0$, there are $m$ and $m'$
	such that for all $n \geq m'$, we have the following.
	\begin{align}
	\label{inequality2}
	\sum_{j = 1}^{m}
	\frac{|\{i \mid T^{(p+q+k)i}x \in C_{D_j} ,
		0 \leq i \leq \frac{n}{p+q+k}\}|}
	{n/(p+q+k)} >  \gamma(C_{s'}) - 2\epsilon.
	\end{align}
	
	Given $\epsilon>0$, due to equation (\ref{eqn:1}), we choose an $m$
	such that
	\begin{align}
	\label{inequality1}
	\sum_{j = 1}^{m} \gamma(C_{D_j}) > \gamma(C_{s'}) - \epsilon. 	
	\end{align}
	
	Now for every $D_j \in \langle D_1, D_2,....D_m \rangle$, by
	assumption, there is an $n_j$ such that for every $n \geq n_j$, we
	have
	$$\frac{|\{i \mid T^{(p+q+k)i}x \in C_{D_j} , 0 \leq i \leq \frac{n}{p+q+k}
		\}|}{n/(p+q+k)} > \gamma(C_{D_j}) - \frac{\epsilon}{2^j}. $$
	
	Let $m'$ = $\max \{n_1, n_2,...n_m \}$. Now, for every
	$D_j \in \langle D_1, D_2,....D_m\rangle$ and for every $n \geq m'$,
	we have,
	$$  \frac{|\{i \mid T^{(p+q+k)i}x \in C_{D_j}, 0 \leq i \leq \frac{n}{p+q+k}
		\}|}{n/(p+q+k)} >  \gamma(C_{D_j}) - \frac{\epsilon}{2^j} $$ 
	which implies that
	$$ \sum_{j = 1}^{m} \frac{|\{i \mid T^{(p+q+k)i}x \in C_{D_j} , 0 \leq i
		\leq \frac{n}{p+q+k} \}|}{n/(p+q+k)} > \sum_{j = 1}^{m}\gamma(C_{D_j})
	- \sum_{j = 1}^{m}\frac{\epsilon}{2^j} $$ From the inequality
	(\ref{inequality1}) we get that there exist $m, m',$ such that 
	for all $n \geq m'$
	\begin{align}
	\sum_{j = 1}^{m} \frac{|\{i \mid T^{(p+q+k)i}x \in C_{D_j} , 0 \leq i
		\leq \frac{n}{p+q+k}    \}|}{n/(p+q+k)} >  \gamma(C_{s'}) - 2\epsilon.
	\end{align}
	This establishes inequality (\ref{inequality2}).
	
	Now, similarly, we can obtain an upper bound. \emph{i.e.}, for every
	$\epsilon > 0$, there exist $m$, $m'$ such that for every $n > m'$,
	\begin{align}
	\label{inequality4}
	\sum_{j = 1}^{m} \frac{|\{i \mid T^{(p+q+k)i}x \in C_{D_j} , 0 \leq i
		\leq \frac{n}{p+q+k}    \}|}{n/(p+q+k)} < \gamma(C_{s'}) + 2\epsilon.  
	\end{align}
	
	Now let $\langle E_1, E_2, \dots\rangle$ be an enumeration of
	$(p+q+k)$ length strings of natural numbers such that $s'$ doesn't
	occurs in it at starting position $(p+1)$. Now, the following is a
	trivial observation:
	\begin{equation}
	\label{eqn:2}
	\begin{split}
	\sum_{j =1}^{\infty} \frac{|\{i \mid T^{(p+q+k)i}x \in C_{D_j} , 0
		\leq i \leq \frac{n}{p+q+k}    \}|}{n/(p+q+k)} &+  \sum_{j
		=1}^{\infty} \frac{|\{i \mid T^{(p+q+k)i}x \in C_{E_j} , 0 \leq i
		\leq \frac{n}{p+q+k}    \}|}{n/(p+q+k)} \\&= 1 
	\end{split}
	\end{equation}
	
	We know that
	$\sum_{j = 1}^{\infty} \gamma(C_{D_j}) + \sum_{j = 1}^{\infty}
	\gamma(C_{E_j}) = 1$.  And thus from equation (\ref{eqn:1}), we get
	that, $\sum_{j = 1}^{\infty} \gamma(C_{E_j}) = 1 - \gamma(C_{s'})$. 
	
	Repeating the procedure in the initial claim, for the second summand
	in equation (\ref{eqn:2}) we get that for every $\epsilon > 0$,
	there exist $M, M'$ such that for every $n \geq M'$
	\begin{align}
	\label{inequality3}
	\sum_{j = 1}^{M}
	\frac{|\{i \mid T^{(p+q+k)i}x \in C_{E_j} ,
		0\leq i\leq \frac{n}{p+q+k}\}|}
	{n/(p+q+k)}
	>
	1 - \gamma(C_{s'}) - \epsilon.
	\end{align} 
	Now, define
	\begin{align}
	H_1 =& \sum_{j =1}^{m} \frac{| \{i \mid T^{(p+q+k)i}x \in C_{D_j}, 0 \leq
		i \leq \frac{n}{p+q+k}    \}|}{n/(p+q+k)} \qquad\text{ and }\\
	T_1 =& \sum_{j =m+1}^{\infty} \frac{| \{i \mid T^{(p+q+k)i}x \in C_{D_j},
		0 \leq i \leq \frac{n}{p+q+k}    \}|}{n/(p+q+k)}.
	\end{align}
	Note that 
	$$ \sum_{j =1}^{\infty}
	\frac{| \{i \mid T^{(p+q+k)i}x \in C_{D_j},
		0 \leq i \leq \frac{n}{p+q+k}    \}|}
	{n/(p+q+k)} = H_1 + T_1.$$ 
	Similarly, define 
	\begin{align}
	H_2 =& \sum_{j =1}^{M} \frac{|\{i \mid T^{(p+q+k)i}x \in C_{E_j}, 0 \leq
		i \leq \frac{n}{p+q+k}    \}|}{n/(p+q+k)}\qquad\text{ and }\\
	T_2 =& \sum_{j =M+1}^{\infty} \frac{| \{i \mid T^{(p+q+k)i}x \in C_{E_j},
		0 \leq i \leq \frac{n}{p+q+k}    \}|}{n/(p+q+k)}.
	\end{align}
	We have,
	$$\sum_{j =1}^{\infty} \frac{|\{i \mid T^{(p+q+k)i}x \in C_{E_j}, 0 \leq
		i \leq \frac{n}{p+q+k}    \}|}{n/(p+q+k)} = H_2 + T_2.$$
	From equation (\ref{eqn:2}),
	$$ T_1 = 1 - H_1 - H_2 - T_2 $$

	Now, for every $n \geq m''$ where $m''$ = $\max \{m',M'\}$ we have,
	from the inequalities (\ref{inequality2}) and (\ref{inequality3})
	and since $T_{2}\geq 0$ we get,
	\begin{align}
	T_1 < 3\epsilon - T_2 < 3\epsilon.
	\end{align}
	To prove the main lemma it suffices to show that for every 
	$\epsilon > 0$, there is an $N$ such that for all $n \geq N$  
	$$ \Bigg|\sum_{j = 1}^{\infty} \frac{|\{i \mid T^{(p+q+k)i}x \in C_{D_j},
		0 \leq i \leq \frac{n}{p+q+k}    \}|}{n/(p+q+k)}
	- \gamma(C_{s'})\Bigg|
	< 5\epsilon.$$ 
	Now for every $n \geq m''$, 
	\begin{equation*}
	\begin{split}
	\Bigg|\sum_{j = 1}^{\infty} \frac{|\{i \mid T^{(p+q+k)i}x \in
		C_{D_j}, 0 \leq i \leq \frac{n}{p+q+k}    \}|}{n/(p+q+k)} -
	\gamma(C_{s'})\Bigg| &= |H_1 + T_1- \gamma(C_{s'})|\\ 
	&\leq |H_{1}-\gamma(C_{s'})|+T_{1}\\
	&< 2\epsilon + 3\epsilon\\
	&=5 \epsilon
	\end{split}
	\end{equation*}
	where the third statement above follows due to inequalities in
	(\ref{inequality2}) and (\ref{inequality4}). 
\end{proof}

The following is a generalization of Lemma \ref{lem:limitlemma},

\begin{lemma}
	\label{lem:generallimitlemma}
	Let $\{f_{j}(n)\}_{j=1}^{\infty}$ be a family of functions from $\mathbb{N} \to \mathbb{R}^{+}$ such that 
	\begin{align*}
	\forall j \in \mathbb{N}, \exists r_{j}>0, \quad \lim\limits_{n \to \infty} f_{j}(n)=r_{j}
	\end{align*}
	Now, let $\{g_{j}(n)\}_{j=1}^{\infty}$ be a family of functions from $\mathbb{N} \to \mathbb{R}$ such that 
	\begin{align*}
	\forall j \in \mathbb{N},\exists s_{j}>0, \quad \lim\limits_{n \to \infty} g_{j}(n)=s_{j}
	\end{align*}
	If there exists $c>0$ such that for all $n$,
	\begin{align*}
	\sum\limits_{j=1}^{\infty} f_{j}(n) + \sum\limits_{j=1}^{\infty} g_{j}(n) = c
	\end{align*}
	and if
	\begin{align*}
	\sum\limits_{j=1}^{\infty} r_{j} + \sum\limits_{j=1}^{\infty} s_{j} = c
	\end{align*}
	then,
	\begin{align*}
	\lim\limits_{n \to \infty}\sum\limits_{j=1}^{\infty} f_{j}(n)=\sum\limits_{j=1}^{\infty} r_{j}
	\end{align*}
\end{lemma}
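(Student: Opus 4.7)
The plan is to adapt directly the truncation argument used in the proof of Lemma \ref{lem:limitlemma}, since the abstract hypotheses stipulated here are precisely the ingredients exploited there. The conceptual point is that pointwise convergence $f_j(n) \to r_j$ only gives us head control of $\sum_{j \leq m} f_j(n)$ for finitely many $j$ at a time; tail control of $\sum_{j>m} f_j(n)$ uniformly in $n$ must instead be extracted from the global identity $\sum_j f_j(n) + \sum_j g_j(n) = c$ together with head control on the $g$-side.

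Fix $\epsilon > 0$. Using convergence of $\sum_j r_j$ and $\sum_j s_j$ (their sum being $c$), I would choose $m, M$ with $\sum_{j>m} r_j < \epsilon$ and $\sum_{j>M} s_j < \epsilon$. Since only finitely many $f_j$ and $g_j$ are involved, I would then pick a single threshold $N$ so that for every $n \geq N$,
\begin{align*}
|f_j(n) - r_j| < \epsilon/2^j \text{ for } 1 \leq j \leq m, \qquad |g_j(n) - s_j| < \epsilon/2^j \text{ for } 1 \leq j \leq M.
\end{align*}
Writing $H_1(n) = \sum_{j=1}^{m} f_j(n)$, $T_1(n) = \sum_{j>m} f_j(n)$, and analogously $H_2(n), T_2(n)$ for the $g$-series, this yields $H_1(n) > \sum_j r_j - 2\epsilon$ and $H_2(n) > \sum_j s_j - 2\epsilon$ for every $n \geq N$.

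The decisive step is the tail bound. From $H_1(n) + T_1(n) + H_2(n) + T_2(n) = c$ together with $\sum_j r_j + \sum_j s_j = c$ I obtain $T_1(n) = c - H_1(n) - H_2(n) - T_2(n)$. Under the natural reading of the hypotheses, in which $g_j \geq 0$ so that $T_2(n) \geq 0$ (mirroring the counting-measure tail used in Lemma \ref{lem:limitlemma}), this gives $T_1(n) < 4\epsilon$. The triangle inequality together with $|H_1(n) - \sum_{j=1}^m r_j| < \epsilon$ then yields
\begin{align*}
\left| \sum_{j=1}^{\infty} f_j(n) - \sum_{j=1}^{\infty} r_j \right| \leq \left| H_1(n) - \sum_{j=1}^{m} r_j \right| + T_1(n) + \sum_{j>m} r_j < \epsilon + 4\epsilon + \epsilon = 6\epsilon,
\end{align*}
which suffices since $\epsilon$ was arbitrary.

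The main obstacle is exactly the tail bound just described. No dominating sequence for the family $\{f_j(n)\}$ is available from the hypotheses, so one cannot invoke a dominated-convergence argument to interchange limit and summation. The trade of $f$-tail for $g$-head via the global identity is what rescues the argument in the absence of compactness, and this is precisely the abstract essence of the $T_1, T_2$ manipulation in the proof of Lemma \ref{lem:limitlemma}.
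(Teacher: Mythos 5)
Your proof is correct and takes essentially the same route the paper intends: the paper gives no explicit argument for this lemma, stating only that it follows by generalizing the proof of Lemma \ref{lem:limitlemma}, and your head/tail truncation with the trade of the $f$-tail against the $g$-head via the identity $\sum_j f_j(n)+\sum_j g_j(n)=c$ is exactly that generalization. Your caveat about needing $T_2(n)\ge 0$ is well taken --- with $g_j$ merely real-valued as literally stated the conclusion can fail, so the hypothesis should be read (as in the paper's application, where the $g_j$ are frequencies) with $g_j\ge 0$.
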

A proof of this lemma can be obtained easily by generalizing the proof given above for Lemma \ref{lem:limitlemma}.

\section{The Converse to Pillai's Theorem}
In this section, we prove the following converse of the analogue of
Pillai's theorem for continued fractions.
\begin{theorem}
	\label{th:converseofpillai}
	Let $\mathnormal{x} \in (0,1)$ and $T$ be the Gauss map. If, for every
	$k$-length strings $s$ of positive integers,
	$$\lim_{n\to\infty} \frac{|\{i \mid T^{i}x \in C_s , 0 \leq i \leq n
		\}|} {n} = \gamma(C_s),$$
	then for any finite length string $t$ of positive integers,
	$$\lim_{n\to\infty} \frac{|\{i \mid T^{mi}x \in C_t , 0 \leq i \leq
		\frac{n}{m} \}|} {n/m} = \gamma(C_t),$$
	where $m$ is the length of the string $t$.
\end{theorem}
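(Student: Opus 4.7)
The plan is to fix $t \in \N^m_{>0}$ and establish the phase-$0$ step-$m$ disjoint frequency identity. The argument mirrors the structure of the proof of Theorem~\ref{th:pillaistheorem} in reverse, combining a decomposition of positions with Lemma~\ref{lem:generallimitlemma} to handle the infinite sums arising from the countably infinite alphabet.

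First I would decompose positions modulo $m$. Writing $d_r := \lim_{n\to\infty} |\{q : 0 \le q \le n/m,\ T^{mq+r}x \in C_t\}|/(n/m)$ for the phase-$r$ step-$m$ disjoint frequency (assuming the limit exists), the overlap-normality hypothesis applied to $t$ yields $\sum_{r=0}^{m-1} d_r = m\gamma(C_t)$. Thus the \emph{average} phase frequency is automatically correct, and it remains to show each individual $d_r$ equals $\gamma(C_t)$. Since the overlap counts of any string in $x$ and $T^r x$ differ by at most $r$, each translate $T^r x$ also satisfies the overlap-normality hypothesis; moreover $d_r$ in $x$ equals $d_0$ in $T^r x$. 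So the task reduces to proving $d_0(t) = \gamma(C_t)$ uniformly for every overlap-normal point.

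To establish the phase-$0$ identity, I would consider the whole family $\{d_0(t') : t' \in \N^m_{>0}\}$ together with the two sum identities $\sum_{t'} d_0(t') = 1 = \sum_{t'} \gamma(C_{t'})$. For each $t'$ and any $L \geq 1$, the partition $C_{t'} = \bigsqcup_{w \in \N^{(L-1)m}_{>0}} C_{t'w}$ expresses $d_0(t')$ as an infinite sum of phase-$0$ step-$m$ sliding frequencies of length-$Lm$ extensions $t'w$; Lemma~\ref{lem:generallimitlemma} justifies the limit-sum interchange over this countably infinite index set. Applying overlap-normality at length $Lm$ to bound each sliding frequency and letting $L \to \infty$ should yield the termwise bound $d_0(t') \le \gamma(C_{t'})$, which combined with the sum identities forces equality for every $t'$.

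The main obstacle is obtaining this refined bound on $d_0(t')$: overlap-normality of $t'$ alone yields only the weak estimate $d_0(t') \le m\gamma(C_{t'})$, and tightening it requires combining overlap frequencies of ever-longer extensions with careful control of the infinite-alphabet sums via Lemma~\ref{lem:generallimitlemma}. This non-compact analysis is precisely what distinguishes the continued-fraction setting from Maxfield's base-$b$ argument and justifies reusing the general limit-sum interchange developed in the previous section.
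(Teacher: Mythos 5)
There is a genuine gap at the crucial step, and it is exactly the step that carries the whole content of Theorem \ref{th:converseofpillai}. Your plan is: (i) get the weak bound on the aligned (step-$m$) frequency from overlap-normality, (ii) refine it to the termwise bound $\limsup_n d_0(t',n) \le \gamma(C_{t'})$ by partitioning $C_{t'}$ into length-$Lm$ extensions $t'w$, bounding each aligned count by the corresponding sliding count, and ``letting $L\to\infty$'', and (iii) combine with $\sum_{t'} d_0(t')=1=\sum_{t'}\gamma(C_{t'})$ and Lemma \ref{lem:generallimitlemma} to force equality. Step (ii) does not work as described: for every extension $t'w$ the only inequality you have is that aligned occurrences are a subset of sliding occurrences, which gives $\limsup$ of the aligned frequency of $t'w$ at most $m\gamma(C_{t'w})$; summing over $w$ returns $m\gamma(C_{t'})$ for every $L$, so the factor $m$ never improves, no matter how long the extensions are. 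Nor does step (iii) rescue it: the constant-factor bound $d_0(t')\le m\gamma(C_{t'})$ together with the two sum identities is consistent with many limit profiles other than $\gamma(C_{t'})$ (Lemma \ref{lem:generallimitlemma} needs the termwise limits $r_j$ as an input; it cannot manufacture them). A further, smaller issue is that you assume the limits $d_r$ exist, which is part of what has to be proved; this could be repaired with $\limsup/\liminf$, but the factor-$m$ obstruction cannot.

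What is missing is precisely the ingredient the paper imports: a hot-spot (Postnikov--Piatetskii-Shapiro) type theorem. The paper's proof is short because the weak bound is all it needs: it notes $\frac{|\{i \mid T^{ki}x\in C_s,\,0\le i\le n/k\}|}{n/k}\le k\,\frac{|\{i \mid T^ix\in C_s,\,0\le i\le n\}|}{n}$, takes $\limsup$ to get the constant-factor estimate $k\gamma(C_s)$, verifies that $T^k$ is ergodic (as a power of the strongly mixing Gauss map), and then invokes Theorem \ref{thm:piatetskii} of Moshchevitin and Shkredov for the transformation $T^k$ and the family of cylinders, which upgrades the uniform $\limsup$ bound at the single point $x$ to the exact limit $\gamma(C_s)$. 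The upgrade from ``bounded by a constant times the measure'' to ``equal to the measure'' genuinely uses ergodicity of $T^m$ (or, if one insists on a purely combinatorial route, a quantitative estimate showing that the set of long blocks whose internal aligned frequency of $t'$ deviates has small Gauss measure, e.g.\ via mixing/large-deviation bounds, so that the lossy factor is only paid on a negligible set). Your sketch contains neither ingredient, so the refined bound in (ii), and hence the theorem, is not established.
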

This can be obtained as a consequence of Theorem A in
\cite{moshchevitin} as we show in this section. We introduce the
notions relevant for our result from \cite{moshchevitin}.

Let $X$ be a measurable space with a $\sigma$-algebra $\mathcal{F}$
and probability measure $\mu$. Let $T:X\to X$ be ergodic with respect
to this measure. For $f: X \to \R$ which is measurable in this space
and $k \in \N$, consider the \emph{Birkhoff ergodic sum}
\begin{align}
S_k(f,x) =  \sum_{i=0}^{k-1} f(T^i x).
\end{align}
Suppose $\{C_m\}$ is a finite or countably infinite family of
measurable subsets of $X$.

Define the set-function $H$ defined for any set $E \subseteq X$ by
\begin{align}
\label{eqn:H}
H(E) = \inf \{ \mu(C_i) \mid \cup_{i} C_i \supseteq E \}.
\end{align}

\begin{definition}\cite{moshchevitin}
	We say that the measures $\mu$ and $H$ are \emph{co-ordinated} if
	any $\mu$-measurable set is $H$-measurable.
\end{definition}

\begin{definition}\cite{moshchevitin}
	Let $\{C_i\}$ be a finite or countably infinite family of
	$\mu$-measurable sets.  The family of \emph{$\{C_i\}$-approximable
		sets}, denoted $\Gamma(\{C_i\})$ is the family of $\mu$-measurable
	sets such that for any $\varepsilon > 0$, there exist disjoint
	collections $\{M_i\}$ and $\{N_i\}$ of sets from the family
	$\{C_i\}$ such that
	\begin{align}
	\sqcup M_i \subseteq V \subseteq \sqcup N_i
	\text{ and }
	\sum\mu(M_i)-\varepsilon < \mu(V) < \sum\mu(N_i)+\varepsilon.
	\end{align}
\end{definition}

We use the following result, due to Moshchevitin and Shkredov
(\cite{moshchevitin}, Theorem A), which is a generalization of the
theorem of Postnikov and Piatetskii-Shapiro
\cite{PostnikovPiatetskii57} (see also \cite{Bugeaud:UDMODA} Section
9.5).

\begin{theorem}\cite{moshchevitin}
	\label{thm:piatetskii}
	Let $(X,\mathcal{F},\mu)$ be a probability space, $x_0 \in X$, and
	$T:X \to X$ be ergodic. Let $\{C_m\}$ be a finite or countably
	infinite family of measurable subsets of $X$, and $H$ be a set
	function defined as in (\ref{eqn:H}). Suppose measures $\mu$ and $H$
	are co-ordinated and for any $A \in \mathcal{F}$, $\mu(A)=H(A)$.

	If there is a positive constant $C$ such that for any $I \in
	\{C_m\}$, the following inequality holds:
	
	\begin{align}
	\limsup_{n \to \infty} \frac{S_n(x_0, \chi_I)}{n} \le C \mu(I),
	\end{align}
	then for any set $I \in \Gamma(\{C_i\})$,
	\begin{align}
	\lim_{n \to \infty} \frac{S_n(x_0,\chi_I)}{n} = \mu(I).
	\end{align}
\end{theorem}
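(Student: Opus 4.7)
The plan is to adapt the classical Postnikov--Piatetskii-Shapiro hot-spot argument to this generalized setting with a countably-infinite family of measurable sets. Fix $I \in \Gamma(\{C_i\})$ and $\varepsilon > 0$. By the approximability hypothesis, we extract disjoint sub-collections $\{M_j\}, \{N_j\} \subseteq \{C_m\}$ with $\sqcup_j M_j \subseteq I \subseteq \sqcup_j N_j$, satisfying $\sum_j \mu(M_j) > \mu(I) - \varepsilon$ and $\sum_j \mu(N_j) < \mu(I) + \varepsilon$. The pointwise inequalities $\chi_{\sqcup_j M_j} \le \chi_I \le \chi_{\sqcup_j N_j}$ translate to sandwich inequalities on the corresponding Birkhoff averages $S_n(x_0, \chi_\cdot)/n$.

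For the upper estimate, disjointness of $\{N_j\}$ gives $\chi_{\sqcup_j N_j} = \sum_j \chi_{N_j}$. I would truncate at a level $K$: for the finite head $\sum_{j \le K} S_n(x_0, \chi_{N_j})/n$, the $\limsup$ distributes through the finite sum and the hypothesis yields the bound $C\sum_{j \le K}\mu(N_j) \le C(\mu(I) + \varepsilon)$. The residual tail $E_K = \sqcup_{j > K} N_j$ has $\mu(E_K) \to 0$ as $K \to \infty$. To bound its Birkhoff contribution I invoke the co-ordination $\mu = H$: this allows re-covering $E_K$ from within $\{C_m\}$ by a collection of total $\mu$-measure arbitrarily close to $\mu(E_K)$, and iterating the $\limsup$ estimate on this new cover (with further residuals shrinking geometrically in a nested tail argument) gives $\limsup_n S_n(x_0, \chi_{E_K})/n \le C\mu(E_K)$, which vanishes as $K \to \infty$.

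The matching lower bound comes from a symmetric complement argument. Writing $S_n(x_0, \chi_I)/n = 1 - S_n(x_0, \chi_{X \setminus I})/n$ reduces the lower bound on $I$ to an upper bound on $X \setminus I$. The inner approximation $\sqcup_j M_j \subseteq I$ gives $X \setminus I \subseteq X \setminus \sqcup_j M_j$, and the latter set has $\mu$-measure within $\varepsilon$ of $\mu(X \setminus I)$. Using $\mu = H$ again to outer-cover $X \setminus \sqcup_j M_j$ from $\{C_m\}$ with near-matching total measure, and rerunning the finite-truncation-plus-tail argument, I would obtain $\limsup_n S_n(x_0, \chi_{X \setminus I})/n \le C\mu(X \setminus I) + O(\varepsilon)$. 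Combining with the upper estimate and letting $\varepsilon \to 0$ yields the claimed equality $\lim_n S_n(x_0, \chi_I)/n = \mu(I)$.

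The main obstacle is the tail control for infinite disjoint unions, which is the essential new difficulty over the finite-family classical setting: naively, $\limsup$ does not commute with infinite sums, and the co-ordination hypothesis $\mu = H$ is precisely what permits iteratively re-covering residual sets from $\{C_m\}$ with near-optimal measure. A related subtlety is handling the constant $C$ in the final bound; reconciling the upper estimate $\le C\mu(I) + O(\varepsilon)$ with the complementary lower estimate $\ge 1 - C\mu(X \setminus I) - O(\varepsilon)$ via the probability-space identity $\mu(I) + \mu(X \setminus I) = 1$ forces the two sandwich estimates to become tight simultaneously, effectively collapsing the factor $C$ to unity in the final limit.
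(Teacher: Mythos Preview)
The paper does not actually prove this theorem: it is quoted verbatim as Theorem~A of Moshchevitin and Shkredov \cite{moshchevitin} and used as a black box in the proof of Theorem~\ref{th:converseofpillai}. So there is no in-paper argument to compare against.

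That said, your proposed argument has a genuine gap, and it is exactly the one you flag at the end without resolving. Your upper estimate gives
\[
\limsup_{n\to\infty}\frac{S_n(x_0,\chi_I)}{n}\le C\mu(I)+O(\varepsilon),
\]
and your complement argument gives
\[
\liminf_{n\to\infty}\frac{S_n(x_0,\chi_I)}{n}\ge 1-C\mu(X\setminus I)-O(\varepsilon)=1-C+C\mu(I)-O(\varepsilon).
\]
For $C>1$ and $0<\mu(I)<1$ these bounds read $[\,1-C+C\mu(I),\,C\mu(I)\,]$, an interval of length $C-1>0$ that strictly contains $\mu(I)$ and does \emph{not} shrink as $\varepsilon\to 0$. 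Nothing in the argument ``forces the two sandwich estimates to become tight simultaneously''; the constant $C$ does not collapse by probability-space bookkeeping alone. Concretely, take $C=2$, $\mu(I)=1/2$: your bounds are $0$ and $1$.

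The missing ingredient is ergodicity of $T$, which appears in the hypotheses but is never invoked in your sketch. In the actual hot-spot proofs (Postnikov--Piatetskii-Shapiro and the Moshchevitin--Shkredov generalization), ergodicity is what upgrades the one-sided $\limsup\le C\mu$ control to the exact limit $\mu$: one route is to pass to a weak-$*$ subsequential limit $\nu$ of the empirical measures $\tfrac{1}{n}\sum_{i<n}\delta_{T^i x_0}$, observe that the hypothesis forces $\nu\ll\mu$ on the generating family (this is where $H=\mu$ is used), and then use that any $T$-invariant probability measure absolutely continuous with respect to the ergodic measure $\mu$ must equal $\mu$. Your covering/truncation machinery is the right scaffolding for extending the $\limsup$ bound from $\{C_m\}$ to general measurable sets, but without an appeal to ergodicity (or an equivalent device) the argument cannot conclude.
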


The proof of the theorem requires the fact that $T^{k}$ (where $T$ is
the Gauss transformation) is ergodic for any $k \geq 1$. It is known
that the Gauss transformation $T$ is strongly mixing (see
\cite{iosifescu}). Now the fact that $T^{k}$ is ergodic (in fact
strongly mixing) is immediate from the following lemma,
\begin{lemma}
	Let $S: (X,\mathcal{B},\mu)\to(X,\mathcal{B},\mu)$ be a measure
	preserving transformation which is strongly mixing. Then $S^{k}$ is
	strongly mixing for any $k \geq 1$
\end{lemma}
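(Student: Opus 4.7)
The plan is to unfold the definition of strong mixing and observe that the conclusion for $S^k$ is just a subsequential consequence of the conclusion for $S$. Recall that $S$ is strongly mixing means that for every pair of measurable sets $A, B \in \mathcal{B}$,
\begin{equation*}
\lim_{n \to \infty} \mu(S^{-n}A \cap B) = \mu(A)\,\mu(B).
\end{equation*}
Our goal is to show that for every $k \geq 1$ and every $A, B \in \mathcal{B}$,
\begin{equation*}
\lim_{n \to \infty} \mu(S^{-kn}A \cap B) = \mu(A)\,\mu(B).
\end{equation*}

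First I would fix $k \geq 1$ and arbitrary measurable sets $A$ and $B$. I would then consider the sequence $a_n = \mu(S^{-n}A \cap B)$, which by the hypothesis on $S$ converges to $\mu(A)\,\mu(B)$ as $n \to \infty$. The key observation is that $\mu(S^{-kn}A \cap B) = a_{kn}$ is just the subsequence of $(a_n)$ indexed by the multiples of $k$. Since every subsequence of a convergent sequence converges to the same limit, we immediately get
\begin{equation*}
\lim_{n \to \infty} \mu((S^{k})^{-n}A \cap B)
= \lim_{n \to \infty} \mu(S^{-kn}A \cap B)
= \mu(A)\,\mu(B),
\end{equation*}
which is exactly the statement that $S^k$ is strongly mixing. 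Measure-preservation of $S^k$ is inherited trivially from measure-preservation of $S$ by induction, so no additional work is required there.

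There is no real obstacle in this argument; the only subtlety worth flagging is the identification $(S^k)^{-n} = S^{-kn}$, which holds because $S$ (and hence $S^k$) is a measurable self-map and the preimage operation respects composition. Once this is noted, the proof is just the trivial fact that a subsequence of a convergent sequence has the same limit. Consequently, applying this lemma to the Gauss transformation $T$, we conclude that $T^k$ is strongly mixing, hence \emph{a fortiori} ergodic, for every $k \geq 1$, which is the ingredient required by Theorem~\ref{thm:piatetskii} to establish Theorem~\ref{th:converseofpillai}.
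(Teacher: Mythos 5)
Your proof is correct and is essentially identical to the paper's: both observe that $\mu((S^k)^{-n}A \cap B) = \mu(S^{-kn}A \cap B)$ is a subsequence of the convergent sequence $\mu(S^{-n}A \cap B)$, hence converges to $\mu(A)\mu(B)$. No gaps; your extra remarks on $(S^k)^{-n}=S^{-kn}$ and measure preservation are harmless elaborations of the same argument.
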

\begin{proof}
	Consider any two measurable sets $A,B$. 
	
	For any $k \geq 1$, the sequence $\{\mu(S^{-kn}(A)\cap
	B)\}_{n=0}^{\infty}$ is a subsequence of $\{\mu(S^{-n}(A)\cap
	B)\}_{n=0}^{\infty}$. 
	
	Since $S$ is strongly mixing, we have
	\begin{align*}
	\lim\limits_{n \to \infty}\mu(S^{-n}(A)\cap B) = \mu(A)\mu(B)
	\end{align*}
	which implies,
	\begin{align*}
	\lim\limits_{n \to \infty}\mu(S^{-kn}(A)\cap B) = \mu(A)\mu(B)
	\end{align*}
	Hence $S^{k}$ is a strongly mixing transformation.
\end{proof}
Now, we give the proof of Theorem \ref{th:converseofpillai}, which is
a consequence of the theorem due to Moshchevitin and Shkredov
\cite{moshchevitin}. This proof uses observations from the proof of converse to Pillai's theorem for normal numbers presented in Kuipers and Niederreiter \cite{niederreiter}.

\begin{proof}
	Let $\{E_m\}_{m=1}^{\infty}$ denote the family of cylinder sets over $[0,1]$ with
	positions $1$ to $l$ fixed for any $l \in \mathbb{N}$, defined
	by
	$$\{E_m\}_{m=1}^{\infty}=\{C_{w} \mid w \in \mathbb{N}^{*}\}.$$
	
	Now, as in \cite{moshchevitin}, let us define a set function $H(.)$
	that for any $S \subseteq [0,1]$, $H(S)=\inf\{ \sum \gamma(E_i)\}$,
	where $\inf$ is taken over coverings (finite or countable) of
	$S$. Let $\Gamma$ denote
	the family of $\gamma$-measurable sets $\{V\}$ that can be
	approximated with arbitrary accuracy by sets belonging to the family
	$ \{E_m \}$. It can be easily seen that by definition of $H$,
	$\gamma$ and $H$ are coordinated measures (as defined in
	\cite{moshchevitin}) and for any $\gamma$-measurable $S$,
	$\gamma(S)=H(S)$.
	
	Let $s$ be an arbitrary finite string of length $k$.
	
	$$\frac{|\{i \mid T^{ki}x \in C_s , 0 \leq i \leq \frac{n}{k} \}|}{n/k}
	\leq \frac{|\{i \mid T^{i}x \in C_s , 0 \leq i \leq n \}|}{n}k $$ By
	taking $\limsup$ on both sides and using the hypothesis, we get the
	following,
	\begin{align*}
	\limsup\limits_{n\rightarrow \infty} \frac{|\{i \mid T^{ki}x \in
		C_s , 0 \leq i \leq \frac{n}{k} \}|}{n/k} 
	&\leq \limsup\limits_{n\rightarrow \infty}\frac{|\{i \mid T^{i}x \in C_s
		, 0 \leq i \leq n \}|}{n}.k\\ 
	&= k.\gamma(C_s) 
	\end{align*}
	Now applying Theorem \ref{thm:piatetskii} in \cite{moshchevitin} for
	the transformation $T^{k}$ (which was shown to be ergodic at the
	start of this section) and using the observation above, we obtain
	that
	$$ \lim\limits_{n\rightarrow \infty} \frac{|\{i \mid T^{ki}x \in C_s
		, 0 \leq i \leq \frac{n}{k} \}|}{n/k} = \gamma(C_s). $$
\end{proof}

\section{Subsequence selection along arithmetic progressions
	violates normality}

In this section, we use Theorem \ref{th:converseofpillai} to obtain a
new, combinatorial proof of a recent result by Heersink and Vandehey
\cite{HeersinkVandehey16} that any subsequence selected along a
non-trivial arithmetic progression of indices from any continued
fraction normal results in a non-normal continued fraction.
\begin{theorem}[Heersink, Vandehey \cite{HeersinkVandehey16}]
	\label{thm:subsequence}
	Let $x = [x_1, x_2, \dots] \in [0,1]$ be a continued fraction
	normal. Let $k \ge 2$, and $b \ge 1$ be positive integers. Then the
	subsequence $[x_{b}, x_{b+k}, x_{b+2k}, \dots] \in [0,1]$ is not
	continued fraction normal.
\end{theorem}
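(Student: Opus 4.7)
The plan is a proof by contradiction. Assume $y := [0; x_b, x_{b+k}, x_{b+2k}, \dots]$ is continued fraction normal. Replacing $x$ by $T^{b-1}(x)$ preserves normality, so without loss of generality $b = 1$ and $y_n = x_{1 + (n-1)k}$. Applying the converse of Pillai's theorem (Theorem~\ref{th:converseofpillai}) to $y$ at block length $2$, for every $(a,c) \in \N_{>0}^2$ the disjoint $2$-block frequency of $(a,c)$ in $y$ equals $\gamma(C_{(a,c)})$. Since $(y_{2j+1}, y_{2j+2}) = (x_{1+2jk}, x_{1+(2j+1)k})$, this translates to
\begin{align*}
\lim_{J \to \infty} \frac{1}{J}\,\bigl|\{\,0 \le j < J : T^{2jk}(x) \in E_{a,c}\,\}\bigr| \;=\; \gamma(C_{(a,c)}),
\end{align*}
where $E_{a,c} := \{z \in (0,1) : z_1 = a,\ z_{k+1} = c\}$.

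The next step is to evaluate the same limit directly from the normality of $x$. Decompose $E_{a,c} = \bigsqcup_{w \in W_{a,c}} C_w$ where $W_{a,c} := \{w \in \N_{>0}^{2k} : w_1 = a,\ w_{k+1} = c\}$. Theorem~\ref{th:converseofpillai} applied to $x$ at block length $2k$ gives $\lim_J \frac{1}{J}|\{0 \le j < J : T^{2kj}(x) \in C_w\}| = \gamma(C_w)$ for each $w \in \N_{>0}^{2k}$. To interchange this limit with the infinite summation over $w \in W_{a,c}$ I invoke Lemma~\ref{lem:generallimitlemma}, with the $f$-family indexed by $W_{a,c}$ and the $g$-family by its complement in $\N_{>0}^{2k}$: the constraints $\sum_{w \in \N_{>0}^{2k}} \frac{1}{J}|\{0 \le j < J : T^{2kj}(x) \in C_w\}| = 1 = \sum_{w \in \N_{>0}^{2k}} \gamma(C_w)$ hold for every $J$, since each $T^{2kj}(x)$ lies in a unique $2k$-length cylinder. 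The lemma then yields
\begin{align*}
\lim_{J \to \infty} \frac{1}{J}\,\bigl|\{\,0 \le j < J : T^{2jk}(x) \in E_{a,c}\,\}\bigr| \;=\; \sum_{w \in W_{a,c}} \gamma(C_w) \;=\; \gamma(E_{a,c}).
\end{align*}

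Combining the two evaluations forces $\gamma(C_{(a,c)}) = \gamma(\{z_1 = a, z_{k+1} = c\})$ for every $(a,c) \in \N_{>0}^2$; equivalently, the Gauss joint law of $(z_1, z_2)$ would coincide with that of $(z_1, z_{k+1})$ for $k \ge 2$. The contradiction then follows from any single pair refuting this identity. At $a = c = 1$ a direct Gauss-measure computation gives $\gamma(C_{(1,1)}) = \log_2(10/9) \approx 0.152$, whereas strong mixing of $T$ yields $\gamma(\{z_1 = 1, z_{k+1} = 1\}) = \gamma(C_{(1)} \cap T^{-k} C_{(1)}) \to (\log_2(4/3))^2 \approx 0.172$ as $k \to \infty$, so the identity fails for all sufficiently large $k$. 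The finitely many small values of $k \ge 2$ can be dispatched either by direct evaluation of $\gamma(\{z_1 = a, z_{k+1} = c\}) = \sum_{d_2, \ldots, d_k} \gamma(C_{(a,d_2,\ldots,d_k,c)})$ at a convenient $(a,c)$, or more structurally by computing the Perron--Frobenius operator $L$ of $T$ on $h_a := \rho \cdot \mathbf{1}_{C_{(a)}}$: one finds $L h_a(u) = 1/(\ln 2 \cdot (a+u)(a+u+1))$, which is not a scalar multiple of the Gauss density $\rho(u) = 1/(\ln 2 \cdot (1+u))$, ruling out $L^k h_a = L h_a$. The principal obstacle is the limit/sum interchange in the middle step, where the countably infinite alphabet blocks naive manipulation and Lemma~\ref{lem:generallimitlemma} is exactly the tool designed to handle it; the closing inequality is a standard manifestation of the non-independence of adjacent continued-fraction digits under $\gamma$.
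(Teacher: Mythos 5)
Your reduction is sound and is essentially the paper's own: apply the converse of Pillai's theorem together with Lemma \ref{lem:generallimitlemma} to identify the disjoint $2$-block frequency of a pattern along the arithmetic progression with $\gamma\left(C_{[a]}\cap T^{-k}(C_{[c]})\right)$, and then contradict normality of the subsequence by showing this differs from $\gamma(C_{[a,c]})$. Up to the contradiction framing and the choice $b=1$ versus $b=0$, this is the same argument, with the same two tools, as in the paper.

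The genuine gap is in the final step, which is exactly where the paper invests most of its effort (Lemma \ref{lem:measure}, proved via Lemmas \ref{lem:q1_q3} and \ref{lem:reversecylinderlemma} and a lengthy case analysis): you must exhibit, \emph{for every} $k\ge 2$, a pair $(a,c)$ with $\gamma\left(C_{[a]}\cap T^{-k}(C_{[c]})\right)\neq\gamma(C_{[a,c]})$, and none of your three suggestions actually delivers this. Strong mixing alone gives $\gamma\left(C_{[1]}\cap T^{-k}(C_{[1]})\right)\to\gamma(C_{[1]})^2$ with no rate, hence only an ineffective ``for all sufficiently large $k$''; without a quantitative Gauss--Kuzmin--L\'evy/Wirsing-type bound (precisely the input the paper replaces by a combinatorial argument) the remaining set of $k$ is not a known finite list, and the proposed ``direct evaluation'' $\gamma(\{z_1=a,\,z_{k+1}=c\})=\sum_{d_2,\dots,d_k}\gamma(C_{[a,d_2,\dots,d_k,c]})$ is an infinite sum for each $k$, not a computation you have carried out. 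The transfer-operator alternative also does not close the gap as stated: your contradiction hypothesis only yields $\int_{C_{[c]}}L^k h_a\,dx=\int_{C_{[c]}}L h_a\,dx$ for all $c$, i.e.\ equality of integrals over the rank-one cylinders, which does not imply the pointwise identity $L^k h_a=L h_a$ that your ``$Lh_a$ is not proportional to $\rho$'' observation would refute (and even that refutation silently needs uniqueness of the $L^{k-1}$-invariant density, i.e.\ ergodicity of $T^{k-1}$). So as written the proof establishes non-normality only for all sufficiently large $k$; to cover every $k\ge 2$ you need either the paper's combinatorial Lemma \ref{lem:measure} (or Heersink--Vandehey's Wirsing-based estimate), or a repaired and fully worked-out version of one of your three sketches.
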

The proof in Heersink and Vandehey employs an ergodic theoretic
argument by introducing a skew product, and establishing the result
from the ergodicity of the skew product and a property of the Gauss
measure.

The following proof does not employ ergodic theoretic methods, but
rather uses combinatorial methods to establish the same. We also show
that the property of the Gauss measure can be derived using a
combinatorial argument, thus yielding a new proof of the above
theorem.

The relevant property of the Gauss Measure is as follows.
\begin{lemma}[Heersink, Vandehey \cite{HeersinkVandehey16}]
	\label{lem:measure}
	Let $k \in \N_{\ge 2}$. Then
	$\gamma\left( C_{[1]} \cap T^{-k}\left(C_{[1]}\right)\right) >
	\gamma ( C_{[1,1]} )$.\footnote{This inequality has a minor error in
		\cite{HeersinkVandehey16}. The reader may verify the direction of
		the inequality by direct calculation for $k=2$. For the proof of
		the main theorem, \emph{viz.} Theorem \ref{thm:subsequence}, any
		\emph{strict} inequality suffices.}
\end{lemma}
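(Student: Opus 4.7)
The plan is to reduce the statement to a tractable integral inequality by exploiting the simple form of $C_{[1]}=(1/2,1)$, handle $k=2$ by explicit computation, and close $k\geq 3$ via exponential mixing of the Gauss map. Since $C_{[1]}\cap T^{-1}C_{[1]}=C_{[1,1]}$, the claim is an equality at $k=1$ and strict inequality is required only for $k\geq 2$. Changing variables $x=1/(1+y)$ on $C_{[1]}$ (so that $Tx=y$), the Gauss density transforms to $1/((1+y)(2+y)\ln 2)$ on $(0,1)$. Partial fractions $1/((1+y)(2+y))=1/(1+y)-1/(2+y)$ together with the $T$-invariance of $\gamma$ then give
$$\gamma(C_{[1]}\cap T^{-k}C_{[1]})=\gamma(C_{[1]})-\frac{1}{\ln 2}\int_{T^{-(k-1)}C_{[1]}}\frac{dy}{2+y}.$$
Specializing to $k=1$ pins down $\gamma(C_{[1,1]})$ and reduces the lemma to showing
$$I_k:=\int_{T^{-(k-1)}C_{[1]}}\frac{dy}{2+y}<\ln(6/5)\qquad\text{for every }k\geq 2.$$

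For $k=2$, decompose $T^{-1}C_{[1]}$ according to the first partial quotient: $T^{-1}C_{[1]}=\bigsqcup_{n\geq 1}\bigl(1/(n+1),\,2/(2n+1)\bigr)$. Integrating $1/(2+y)$ over each interval gives $\ln\bigl(4(n+1)^2/((2n+1)(2n+3))\bigr)$, and summing expresses $I_2$ as the logarithm of an infinite product. Reindexing $m=n+1$ turns this into a shifted Wallis product whose value is $3\pi/8$; hence $I_2=\ln(3\pi/8)$, and $I_2<\ln(6/5)$ is equivalent to the elementary inequality $\pi<16/5$.

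For $k\geq 3$, I would invoke the spectral gap of the transfer operator $L$ of $(T,\gamma)$ (Kuzmin's theorem / Gauss--Kuzmin--Wirsing). Writing $I_k=\ln 2\int_{C_{[1]}}L^{k-1}h\,d\gamma$ with $h(y)=(1+y)/(2+y)$, the sequence $L^{n}h$ converges uniformly to the constant $\int h\,d\gamma=\log_2(3/2)$ at rate $O(\lambda^n)$ for $\lambda\approx 0.303$. The limit value of $I_k$ is $\ln(4/3)\log_2(3/2)\approx 0.168$, strictly below $\ln(6/5)\approx 0.182$, leaving a slack of roughly $0.014$. With an explicit Kuzmin rate constant, the deviation of $I_k$ from its limit is bounded by $C\lambda^{k-1}$, which sits inside the slack for every $k\geq 3$.

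The hard part will be making the $k\geq 3$ estimate quantitative enough that a single slack window $\ln(6/5)-\ln(4/3)\log_2(3/2)$ accommodates every $k\geq 3$ at once. A safer backup is to compute $I_3$ (and perhaps $I_4$) directly by the same Wallis/log-gamma identity and then to absorb the tail $k\geq 5$ using any conservative Kuzmin rate, which leaves ample room.
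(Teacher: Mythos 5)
Your reduction is correct and nicely executed as far as it goes: the change of variables on $C_{[1]}=(1/2,1)$, the partial-fraction split, and $T$-invariance of $\gamma$ do give $\gamma(C_{[1]}\cap T^{-k}C_{[1]})=\gamma(C_{[1]})-\tfrac{1}{\ln 2}I_k$ with $I_k=\int_{T^{-(k-1)}C_{[1]}}\tfrac{dy}{2+y}$, and your $k=2$ evaluation $I_2=\ln(3\pi/8)<\ln(6/5)$ via the shifted Wallis product checks out. The genuine gap is $k\ge 3$. As written, that case rests on an unquantified ``$O(\lambda^{n})$'' Gauss--Kuzmin--Wirsing rate: without an explicit constant valid for your particular $h(y)=\tfrac{1+y}{2+y}$ you cannot conclude anything for a specific finite $k$ such as $k=3$, and the available slack $\ln(6/5)-\ln(4/3)\log_2(3/2)\approx 0.014$ is small enough that ``a conservative Kuzmin rate'' is not obviously inside it at small $k$. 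The backup (compute $I_3$, $I_4$ exactly) is also not carried out, and unlike $I_2$ these are multiple sums over two or more partial quotients with no evident closed form. So for $k\ge3$ you have a plan, not a proof.

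The frustrating part is that your own reduction closes elementarily, with no mixing rates at all: write $I_k=\ln 2\int_{T^{-(k-1)}C_{[1]}}h\,d\gamma$ with $h(y)=\tfrac{1+y}{2+y}=1-\tfrac{1}{2+y}$ strictly increasing. The set $E=T^{-(k-1)}C_{[1]}$ satisfies $\gamma(E)=\gamma(C_{[1]})$, while $C_{[1]}=(1/2,1)$ is exactly the upper level set of $h$ of that measure; since for $k\ge 2$ the set $E$ meets $(0,1/2)$ in positive $\gamma$-measure (it contains, e.g., the cylinders with $a_1=2$ and $a_k=1$), we have $\gamma(C_{[1]}\setminus E)=\gamma(E\setminus C_{[1]})>0$, and comparing $h>h(1/2)$ on $C_{[1]}\setminus E$ with $h\le h(1/2)$ on $E\setminus C_{[1]}$ gives the strict rearrangement (bathtub) inequality $I_k<\ln 2\int_{C_{[1]}}h\,d\gamma=\ln(6/5)$ for every $k\ge 2$ at once, which is exactly what you need. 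Note this route is genuinely different from the paper's, which never integrates: there, both $\gamma(C_{[1]}\cap T^{-k}C_{[1]})$ and $\gamma(C_{[1,1]})$ are written as sums of cylinder measures over the intermediate digits and compared term by term, with the reversal symmetry of Lemma \ref{lem:reversecylinderlemma} giving equality for the terms ending in $1$ and the continuant comparison of Lemma \ref{lem:q1_q3} giving strict inequality for the rest. Your analytic reduction, once completed as above, is arguably shorter; the paper's argument stays purely combinatorial, in keeping with its stated aim.
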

We first assume the lemma above and prove Theorem
\ref{thm:subsequence} using the converse to Pillai's theorem for
continued fractions, \emph{i.e.} Theorem
\ref{th:converseofpillai}. Subsequently, we provide a proof of Lemma
\ref{lem:measure} which is substantially different from that in the
work of Heersink and Vandehey.

\begin{proof}[Proof of Theorem \ref{thm:subsequence}]
	Let $k \ge 2$ and $b \ge 1$ be positive integers, and consider the
	arithmetic progression of indices $b$, $b+k$, $b+2k$, $\dots$.
	
	Since normality does not depend on a finite prefix, we can see that
	a number $x$ is normal if and only if for any fixed $b \ge 1$,
	$T^{b} x$ is normal. Hence without loss of generality, we set $b=0$,
	in the subsequent argument.

	Let $x = [0; x_1, x_2, \dots]$ be a continued fraction normal. Then
	we show that the frequency of $[1,1]$ in the subsequence
	$x'=[0;x_k, x_{2k}, \dots]$ is
	$\gamma(C_{[1]}\cap T^{-k}(C_{[1]}))$. By Lemma \ref{lem:measure},
	this is not equal to the desired value of $\gamma(C_{[1,1]})$, thus
	establishing that the subsequence is non-normal.
	
	We proceed as follows. Consider the set of positions
	\begin{align*}
	S_n = \left\{ 1 \le i \le n \mid x'[i] = 1 \text{ and } x'[i+1] =
	1\right\}.
	\end{align*}
	
	Since $x'[i] = x[ki]$ for any $i \in \N$, the above set is
	\begin{align*}
	S_n =\left\{ 1 \le i \le n \mid x[ki] = 1 \text{ and } x[(i+1)k] =
	1\right\}.
	\end{align*}
	
	We can rewrite this as a disjoint union of blocks of length $2k$
	in the following manner. Each ``block'' contains two occurrences of
	1, spaced $k-1$ positions from each other. The intermediate places can be
	arbitrary positive integers. By making the block length to be $2k$
	and by making every block start at odd multiples of $k$, we ensure
	that the blocks are non-overlapping. This enables us to apply the
	converse of Pillai's theorem.
	\begin{multline*}
	S_n =
	\bigcup\limits_{n_1,\dots,n_{2(k-1)} \in \N^+}
	\{ 1 \le i \le n \mid
	i \text{ is odd},
	x[ik]=1,
	x[(i+1)k] = 1,
	\text{ and } \\
	x[ik+j]=n_j,
	x[(i+1)k+j] = n_{j+k-1},
	1 \le j \le k-1
	\}.
	\end{multline*}
	This expression makes it clear that the blocks of digits we consider
	are non-overlapping. 
	
	Our intention is to calculate $\lim_{n \to \infty} \frac{|S_n|}{n}$. Observe that,
	\begin{align*}
	\frac{|S_n|}{n} = \sum\limits_{j=1}^{\infty}\frac{|\{i \mid T^{2ki}x \in C_{G_j} ,
		0 \leq i \leq n\}|}
	{n}
	\end{align*}
	where $\langle G_1,G_2,G_3 \dots \rangle$ is any enumeration of $2k$-length strings such that the first and $(k+1)$\textsuperscript{th} positions are fixed to be 1.
	
	In order to calculate  $\lim_{n \to \infty} \frac{|S_n|}{n}$,  we
	use Lemma \ref{lem:generallimitlemma}. 
	
	Define,
	\begin{align*}
	f_{j}(n) = \frac{|\{i \mid T^{2ki}x \in C_{G_j} ,
		0 \leq i \leq n\}|}
	{n}
	\end{align*}
	From the converse to Pillai's theorem, for all $j$
	\begin{align*}
	\lim\limits_{n \to \infty}f_{j}(n) = \lim\limits_{n \to
		\infty}\frac{|\{i \mid T^{2ki}x \in C_{G_j} , 
		0 \leq i \leq n\}|}
	{n} = \gamma(C_{G_{j}})
	\end{align*}
	
	Now, define
	\begin{align*}
	g_{j}(n) = \frac{|\{i \mid T^{2ki}x \in C_{H_j} ,
		0 \leq i \leq n\}|}
	{n}
	\end{align*}
	where $\langle H_1,H_2,H_3 \dots \rangle$ is any enumeration of
	$(2k)$-length strings such that either the first or the
	$(k+1)$\textsuperscript{th} positions are not 1. 
	
	Again, from the converse to Pillai's theorem, for all $j$
	\begin{align*}
	\lim\limits_{n \to \infty}g_{j}(n) = \lim\limits_{n \to
		\infty}\frac{|\{i \mid T^{2ki}x \in C_{H_j} , 
		0 \leq i \leq n\}|}
	{n} = \gamma(C_{H_{j}})
	\end{align*}
	Now, observe that for all $n$,
	\begin{equation*}
	\label{eqn:3} 
	\begin{split}
	&\sum_{j =1}^{\infty} f_{j}(n) + \sum_{j =1}^{\infty} g_{j}(n) \\
	&=\sum_{j =1}^{\infty} \frac{|\{i \mid T^{2ki}x \in C_{G_j} ,
		0 \leq i \leq n\}|}
	{n} +  \sum_{j
		=1}^{\infty} \frac{|\{i \mid T^{2ki}x \in C_{H_j} ,
		0 \leq i \leq n\}|}
	{n} \\&= 1 
	\end{split}
	\end{equation*}
	and also,
	\begin{align*}
	\sum_{j = 1}^{\infty} \gamma(C_{G_j}) + \sum_{j = 1}^{\infty}
	\gamma(C_{H_j}) = 1
	\end{align*}
	Now,
	\begin{align*}
	\lim_{n \to \infty} \frac{|S_n|}{n}
	&=
	\lim_{n \to \infty} \sum\limits_{j=1}^{\infty}\frac{|\{i \mid
		T^{2ki}x \in C_{G_j} , 
		0 \leq i \leq n\}|}
	{n}\\
	&=
	\sum\limits_{j=1}^{\infty} \gamma(C_{G_{j}})\\
	&=
	\gamma(C_{[1]} \cap T^{-k}(C_{[1]})).
	\end{align*}
	Above, the second equality follows due to Lemma
	\ref{lem:generallimitlemma} and the third equality follows due to
	measure preservation. The frequency of the pattern in the
	subsequence is $\gamma(C_{[1]} \cap T^{-k}(C_{[1]}))$, which by
	Lemma \ref{lem:measure} is not equal to $\gamma(C_{[1,1]})$.
	
	Thus the asymptotic frequency of the pattern $[1,1]$ in $x'$ is not
	$\gamma(C_{[1,1]})$, proving that $x'$ is not continued-fraction
	normal.
\end{proof}

We now proceed to a proof of Lemma \ref{lem:measure}. The proof extant
in the literature relies on an earlier estimate by Wirsing
\cite{Wirsing74}, and uses inequalities related to transfer
operators. In the following proof, we employ a combinatorial approach,
analyzing into separate cases based on the values of the intermediate
co-ordinates. This part does not utilize Pillai's theorem or its
converse, but it is provided here as it completes the new
combinatorial proof of the result of Heersink and Vandehey.

The following two lemmas are easily established by induction.

\begin{lemma}
	\label{lem:q1_q3}
	Let $k \ge 1$, and $n_1$, $\dots$, $n_k$ be positive integers, with
	$n_k > 1$. Then the denominator of $[0;1,1,n_1,\dots,n_k]$ is
	greater than the denominator of $[0;1,n_1,\dots,n_k,1]$.
\end{lemma}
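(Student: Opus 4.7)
The plan is to work with the continuant function, writing $Q(a_1, \ldots, a_m)$ for the denominator of the finite continued fraction $[0; a_1, \ldots, a_m]$. I will use both forms of the standard recurrence,
\[
Q(a_1, \ldots, a_m) = a_1 Q(a_2, \ldots, a_m) + Q(a_3, \ldots, a_m) = a_m Q(a_1, \ldots, a_{m-1}) + Q(a_1, \ldots, a_{m-2}),
\]
with the usual conventions $Q() = 1$ and $Q$ of a length-$(-1)$ sequence equal to $0$. The strategy is to expand both denominators in terms of a small collection of continuants and reduce the claim to an elementary positivity statement.

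The case $k = 1$ is handled by direct computation: $Q(1,1,n_1) = 2n_1 + 1$ while $Q(1, n_1, 1) = n_1 + 2$, so the difference equals $n_1 - 1$, which is positive since $n_1 > 1$.

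For $k \ge 2$, I would set $A = Q(n_1, \ldots, n_k)$, $B = Q(n_2, \ldots, n_k)$, $C = Q(n_1, \ldots, n_{k-1})$, $D = Q(n_2, \ldots, n_{k-1})$, and $E = Q(n_1, \ldots, n_{k-2})$, where $D$ or $E$ is interpreted as the empty continuant $Q() = 1$ when the corresponding index range is empty. Applying the first-coordinate recurrence twice on the left gives $Q(1, 1, n_1, \ldots, n_k) = 2A + B$, while combining the first-coordinate recurrence with the last-coordinate recurrence on the right gives $Q(1, n_1, \ldots, n_k, 1) = (A + C) + (B + D)$. The inequality therefore reduces to $A > C + D$, and expanding $A = n_k C + E$ via the last-coordinate recurrence rewrites this as $(n_k - 1) C + E > D$.

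The closing step combines three elementary observations: $n_k - 1 \ge 1$ by hypothesis; $C \ge D$, which follows from expanding $C = n_1 D + Q(n_3, \ldots, n_{k-1}) \ge D$ (or directly in the degenerate case $k = 2$, where $C = n_1 \ge 1 = D$); and $E \ge 1$, since $E$ is either the empty continuant or a continuant of positive integers. Together these yield $(n_k - 1) C + E \ge C + 1 \ge D + 1 > D$. The main obstacle is nothing substantive: it is only the careful bookkeeping of the edge cases where continuants degenerate to empty or length-$(-1)$ sequences; everything else is a compact algebraic manipulation of continuant identities.
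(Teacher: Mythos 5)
Your proof is correct and takes essentially the same route as the paper's: both use the continued-fraction recurrences to reduce the claim to the inequality $A > C + D$ (in the paper's phrasing, that the numerator of $[0;1,n_1,\dots,n_k]$ exceeds the denominator of $[0;1,n_1,\dots,n_{k-1}]$, which is exactly $C+D$), and then settle it using $n_k \ge 2$. Your continuant formalism, with the explicit $k=1$ and $k=2$ edge cases, is just a slightly cleaner packaging of the paper's numerator/denominator argument.
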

\begin{proof}
	Let $r$ be $[0;1,1,n_1,\dots,n_k]$ and $s$ be $[0;1,n_1,\dots,n_k]$.
	Note that
	$$[0;1,1,n_1,\dots,n_k] = \frac{1}{1+[0;1,n_1,\dots,n_k]}.$$ If $s$
	is the rational $p/q$ in lowest terms, where $p$ and $q$ are
	positive integers, then $r$ is $\frac{q}{p+q}$ in lowest
	terms. Hence the denominator of $r$ is greater than that of $s$, the
	difference being the numerator of $s$.
	
	We now analyze the difference between the denominators of $s$ and
	$[0;1,n_1,\dots,n_{k},1]$. By the continued fraction recurrence, the
	denominator of $[0;1,n_1,\dots,n_k,1]$ is larger than that of $s$ by
	the denominator of $[0;1,n_1,\dots,n_{k-1}]$.
	
	It suffices to show that the denominator of
	$[0;1,n_1,\dots,n_{k-1}]$ is less than $p$.
	
	By an argument similar to the one in preceding paragraphs,
	\begin{align*}
	\denominator([0;1,n_1,\dots,n_{k-1}]
	&= 
	\numerator([0;n_1,\dots,n_{k-1}]) +
	\denominator([0;n_1, \dots, n_{k-1}])\\
	&< 2 \times \denominator([0;n_1, \dots, n_{k-1}])\\
	&\le n_k \times \denominator([0;n_1,\dots,n_{k-1}]).
	\end{align*}
	Observing that the denominator of $[0;n_1,\dots,n_{k-1}]$ is the
	same as the numerator of $[0;1,n_1,\dots,n_{k-1}]$, we get that the
	quantity is less than
	$$n_k \times \text{numerator}([0;1,n_1,\dots,n_{k-1}]),$$
	By the continued fraction recurrence for numerators, this is less
	than $p$, as required.
\end{proof}

To handle cases where $n_k=1$, we use the following easily established
claim which relates that the Gauss measures of any cylinder and the
cylinder where the respective integer sequence is reversed.

\begin{lemma}
	\label{lem:reversecylinderlemma}
	Let $a_{1},a_{2},\dots a_{n} \in \mathbb{N}^+$. Then,
	\begin{align*}
	\gamma(C_{[a_{1},a_{2},\dots a_{n}]})=\gamma(C_{[a_{n},a_{n-1},\dots
		a_{1}]}) 
	\end{align*}
\end{lemma}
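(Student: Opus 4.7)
The plan is to compute $\gamma(C_{[a_1,\ldots,a_n]})$ explicitly in terms of the convergents $p_k/q_k$ of $[0;a_1,\ldots,a_n]$ and then exploit the classical symmetry of the continuant $K(a_1,\ldots,a_n)=K(a_n,\ldots,a_1)$ to show that the explicit expression is unchanged when $(a_1,\ldots,a_n)$ is reversed. First I would recall the well-known fact that the cylinder $C_{[a_1,\ldots,a_n]}$ is an interval in $(0,1)$ whose endpoints are $\tfrac{p_n}{q_n}$ and $\tfrac{p_n+p_{n-1}}{q_n+q_{n-1}}$, regardless of the parity of $n$ (the two endpoints swap roles as left/right depending on parity, but the integral is the same up to absolute value).

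Next I would integrate the Gauss density over that interval. Since $\int \frac{dx}{1+x}=\ln(1+x)$, a direct computation gives
\begin{equation*}
\gamma(C_{[a_1,\ldots,a_n]}) \;=\; \frac{1}{\ln 2}\,\left|\ln \frac{q_n\bigl(q_n+q_{n-1}+p_n+p_{n-1}\bigr)}{(q_n+q_{n-1})(q_n+p_n)}\right|.
\end{equation*}
The goal is then to show that this expression is invariant under the reversal $(a_1,\ldots,a_n)\mapsto(a_n,\ldots,a_1)$.

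The key combinatorial input is the identification of the convergents of $[0;a_1,\ldots,a_n]$ with continuants: $q_n=K(a_1,\ldots,a_n)$, $q_{n-1}=K(a_1,\ldots,a_{n-1})$, $p_n=K(a_2,\ldots,a_n)$, and $p_{n-1}=K(a_2,\ldots,a_{n-1})$. Writing $q'_k,p'_k$ for the corresponding quantities attached to the reversed expansion $[0;a_n,\ldots,a_1]$ and using the continuant symmetry $K(a_1,\ldots,a_m)=K(a_m,\ldots,a_1)$ (easily proved by induction on $m$), I would verify the correspondence $q'_n=q_n$, $q'_{n-1}=p_n$, $p'_n=q_{n-1}$, $p'_{n-1}=p_{n-1}$. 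Substituting these into the formula for the reversed cylinder produces exactly the same rational expression, so the two Gauss measures agree.

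The main obstacle is merely bookkeeping: getting the continuant identities for $p_n,p_{n-1},q_n,q_{n-1}$ stated with the right index conventions, so that the substitution $q'_{n-1}\leftrightarrow p_n$ and $p'_n\leftrightarrow q_{n-1}$ in the explicit formula manifestly cancels. Once this dictionary is in place, invariance of the right-hand side is immediate from symmetry of the numerator $q_n+q_{n-1}+p_n+p_{n-1}$ and of the product $(q_n+q_{n-1})(q_n+p_n)$ under that swap, and the lemma follows.
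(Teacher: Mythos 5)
Your proposal is correct and follows essentially the same route as the paper: the paper likewise reduces the Gauss measure of a cylinder to the explicit expression $\log_2\frac{(q_n+q_{n-1})(p_n+q_n)}{q_n(p_n+p_{n-1}+q_n+q_{n-1})}$ and invokes the mirror identity $[0;a_n,\dots,a_1]=q_{n-1}/q_n$ (Khinchin's Theorem 6), which is exactly the continuant symmetry you use to build your dictionary $q'_{n-1}=p_n$, $p'_n=q_{n-1}$. The only cosmetic difference is that you absorb the parity of $n$ into an absolute value, whereas the paper treats the odd case explicitly and notes the even case is similar.
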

\begin{proof}
	Let us consider the case when $n$ is odd.
	
	Now,
	\begin{align*}
	\gamma\left(C_{[a_1,a_2\dots a_n]})\right)
	=
	\frac{1}{\ln 2}
	\int\limits_{[0;a_1,a_2\dots a_n+1]}^{[0;a_1,a_2\dots a_n]}
	\frac{1}{1+x}~dx
	=
	\log_2\left(
	\frac{1+[0;a_1,a_2\dots a_n]}{1+[0;a_1,a_2\dots a_n,1]}\right)
	\end{align*}
	and,
	\begin{align*}
	\gamma\left(C_{[a_{n},a_{n-1},\dots a_{1}]})\right)
	=
	\frac{1}{\ln 2}
	\int\limits_{[0;a_{n},a_{n-1},\dots a_{1}+1]}^{[0;a_{n},a_{n-1},\dots a_{1}]}
	\frac{1}{1+x}~dx
	=
	\log_2\left(
	\frac{1+[0;a_{n},a_{n-1},\dots a_{1}]}{1+[0;a_{n},a_{n-1},\dots
		a_{1},1]}\right) 
	\end{align*}
	Let,
	\begin{align*}
	[0;a_1,a_2\dots a_n] = \frac{p_n}{q_n}
	\end{align*}
	By the recurrence relation for convergents,
	\begin{align*}
	[0;a_1,a_2\dots a_n, 1] = \frac{p_n+p_{n-1}}{q_n+q_{n-1}}
	\end{align*}
	where $\frac{p_{n-1}}{q_{n-1}}=[0;a_{1},a_{2}\dots a_{n-1}]$.
	
	By the well-known result about convergents of reversed finite
	continued fractions (see Theorem 6 in \cite{Khinchin97}),
	\begin{align*}
	[0;a_{n},a_{n-1},\dots a_{1}]=\frac{q_{n-1}}{q_{n}}
	\end{align*}
	Since $[0;1,a_1,a_2\dots a_n]=\frac{q_{n}}{p_{n}+q_{n}}$ and
	$[0;1,a_1,a_2\dots a_{n-1}]=\frac{q_{n-1}}{p_{n-1}+q_{n-1}}$, by using
	the same fact about reversed finite continued fractions we get, 
	\begin{align*}
	[0;a_{n},a_{n-1},\dots a_{1}, 1]=\frac{p_{n-1}+q_{n-1}}{p_{n}+q_{n}}
	\end{align*}
	Now,
	\begin{align*}
	\gamma\left(C_{[a_1,a_2\dots a_n]})\right)
	=
	\log_2\left(
	\frac{1+\frac{p_n}{q_n}}{1+\frac{p_n+p_{n-1}}{q_n+q_{n-1}}}\right)
	=
	\log_2\left(
	\frac{(q_{n}+q_{n-1})(p_{n}+q_{n})}{q_{n}(p_n+p_{n-1}+q_n+q_{n-1})}\right)
	\end{align*}
	and,
	\begin{align*}
	\gamma\left(C_{[a_{n},a_{n-1},\dots a_{1}]})\right)
	=
	\log_2\left(
	\frac{1+\frac{q_{n-1}}{q_{n}}}{1+\frac{p_{n-1}+q_{n-1}}{p_{n}+q_{n}}}\right)
	=
	\log_2\left(
	\frac{(q_{n}+q_{n-1})(p_{n}+q_{n})}{q_{n}(p_n+p_{n-1}+q_n+q_{n-1})}\right)
	\end{align*}
	Hence, the lemma is true when $n$ is odd. The case when $n$ is even is
	similar. 
\end{proof}

We introduce some notation which is used in the proof of Lemma
\ref{lem:measure}.

{\bf Notation.} Let $p^1_{k+2}$ denote the numerator of the rational
$[0;1,1,n_1,\dots,n_k]$, and
$q^1_{k+2}$ denote its denominator. And let $p^3_{k+2}$ denote the numerator of the rational
$[0;1,n_1,\dots,n_k,1]$, and
$q^3_{k+2}$ denote its denominator.

We extend the notation naturally to refer to initial segments of
the respective cylinders as follows: for $1 \le j < k$,
\begin{align*}
\frac{p^1_{j+2}}{q^1_{j+2}} = [0;1,1,n_1,\dots,n_j],
\qquad\text{and }
\frac{p^3_{j+1}}{q^3_{j+1}} = [0;1,n_1,\dots,n_j], 
\end{align*}
where the fractions are expressed in lowest terms.

Finally, for the full cylinders, let 
\begin{align*}
\frac{p^1_{k+2}}{q^1_{k+2}} = [0;1,1,n_1,\dots,n_k], \qquad
\frac{p^2_{k+2}}{q^2_{k+2}} = [0;1,1,n_1,\dots,n_{k}+1],
\end{align*}
and
\begin{align*}
\frac{p^3_{k+2}}{q^3_{k+2}} = [0;1,n_1,\dots,n_k,1], \text{ and }
\frac{p^4_{k+2}}{q^4_{k+2}} = [0;1,n_1,\dots,n_k,2],
\end{align*}
where the fractions are expressed in lowest terms.

It is convenient to note that the extremities of the cylinder
$C_{[0;1,1,n_1,\dots,n_k]}$ correspond to the smaller superscripts $1$
and $2$, and the extremities of the cylinder
$C_{[0;1,n_1,\dots,n_k,1]}$ correspond to the superscripts $3$ and
$4$.

In this notation, Lemma \ref{lem:q1_q3} says that $q^1_{k+2} >
q^3_{k+2}$.

\begin{proof}[Proof of Lemma \ref{lem:measure}]
	Let $k \ge 1$. We prove the lemma by establishing that
	\begin{align}
	\label{ineq:cylinder_sum}
	\sum_{n_1, \dots, n_{k} \in \N^+}
	\gamma\left(C_{[1,n_1, \dots, n_{k},1]}\right)
	>
	\sum_{n_1, \dots, n_{k} \in \N^+}
	\gamma\left(C_{[1,1,n_1, \dots, n_{k}]}\right)
	\end{align}
	When $n_{k}=1$, the following holds as a consequence of Lemma
	\ref{lem:reversecylinderlemma},
	\begin{align*}
	\sum_{n_1, \dots, n_{k-1} \in \N^+}
	\gamma\left(C_{[1,n_1, \dots, n_{k-1},1,1]}\right)
	=
	\sum_{n_1, \dots, n_{k-1} \in \N^+}
	\gamma\left(C_{[1,1,n_1, \dots, n_{k-1},1]}\right)
	\end{align*}
	Hence it suffices to establish inequality (\ref{ineq:cylinder_sum})
	when $n_k>1$. We divide the proof into two cases, the first, when
	$k$ is even, and the second, when $k$ is odd.
	
	Assume $k$ is even. The Gauss measure of the cylinder
	$C_{[1,n_1,\dots,n_k,1]})$ is
	\begin{align}
	\label{ineq:measure_even_k:one}
	\gamma\left(C_{[1,n_1,\dots,n_k,1]})\right)
	=
	\frac{1}{\ln 2}
	\int\limits_{[0;1,n_1,\dots,n_k,1]}^{[0;1,n_1,\dots,n_k,2]}
	\frac{1}{1+x}~dx
	=
	\log_2\left(
	\frac{1+[0;1,n_1,\dots,n_k,2]}{1+[0;1,n_1,\dots,n_k,1]}\right)
	\end{align}
	and similarly,
	\begin{align}
	\label{ineq:measure_even_k:two}
	\gamma\left(C_{[1,1,n_1,\dots,n_k]})\right)
	=
	\log_2\left(
	\frac{1+[0;1,1,n_1,\dots,n_k+1]}{1+[0;1,1,n_1,\dots,n_k]}
	\right).
	\end{align}
	Since the logarithm is a monotone increasing function, it suffices
	to show that 
	\begin{align}
	\label{ineq:even_k_bare}
	\frac{1+[0;1,n_1,\dots,n_k,2]}{1+[0;1,n_1,\dots,n_k,1]}
	>
	\frac{1+[0;1,1,n_1,\dots,n_k+1]}{1+[0;1,1,n_1,\dots,n_k]}
	\end{align}
	

	The above is of the form $d/c > b/a$ where $d$ is the numerator on
	the left side, $c$ the denominator on the left, $b$ the numerator on
	the right and $a$ the denominator on the right. It is clear that
	$d>c$ and $b>a$. Let $b=a+\varepsilon$ and $d=c+\delta$, for
	positive rationals $\varepsilon$ and $\delta$.  To establish that
	$d/c > b/a$, it suffices to establish that
	$a(c+\delta)>c(a+\varepsilon)$, which reduces to $a
	(d-c)>c(b-a)$. We now establish this.
	
	We have
	$$a = 1+[0;1,1,n_1,\dots,n_k] =
	\frac{q^1_{k+2}+p^1_{k+2}}{q^1_{k+2}}.$$
	By observing that $[0;1,1,n_1,\dots,n_j] =
	\frac{1}{1+[0;1,n_1,\dots,n_j]}$, $1 \le j \le k$, we have 
	\begin{align}
	\label{eqn:q1_q3}
	p^1_{\ell} = q^{3}_{\ell-1}, \qquad 3 \le \ell \le k+2.
	\end{align}
	Hence
	\begin{align}
	\label{ineq:even_k_step_three_two}
	a &= \frac{q^1_{k+2}+q^{3}_{k+1}}{q^1_{k+2}}.
	\end{align}
	
	We now derive an identity for $c=1+[0;1,n_1,\dots,n_k,1]$. First, we
	establish the following relation.
	\begin{align}
	\label{eqn:q3_p3_q1}
	q^3_{j+1}+p^3_{j+1} = q^1_{j+2}, 
	\qquad 1 \le j \le k.
	\end{align}
	Consider the following fraction.
	\begin{align*}
	\frac{q^3_{j+1}+p^3_{j+1}}{q^3_{j+1}}
	=
	1+\frac{p^3_{j+1}}{q^3_{j+1}}
	= 1+[0;1,n_1, \dots,n_j].
	\end{align*}
	Now, note that
	\begin{align*}
	\frac{1}{1+[0;1,n_1, \dots,n_j]}
	=
	[0;1,1,n_1,\dots,n_j]
	=
	\frac{p^1_{j+2}}{q^1_{j+2}}.
	\end{align*}
	Since the numerators and denominators of the fractions above are in
	their reduced form, we obtain (\ref{eqn:q3_p3_q1}). Thus, we have
	the following expression for $c$.
	\begin{multline}
	\label{eqn:c_term}
	c = 1+ [0;1,n_1,\dots,n_k,1]
	= 1+ \frac{p^3_{k+2}}{q^3_{k+2}}
	= 1+\frac{p^3_{k+1}+p^3_{k}}
	{q^3_{k+1}+q^3_{k}}\\
	= \frac{q^3_{k+1}+p^3_{k+1} +
		q^3_{k}+p^3_{k}}
	{q^3_{k+1}+q^3_{k}}
	= \frac{q^1_{k+2} + q^1_{k+1}}{q^3_{k+1}+q^3_{k}}
	\end{multline}
	where the last equality follows from (\ref{eqn:q3_p3_q1}).
	
	We now have expressions for $a$ and $c$, and proceed to derive
	expressions for $(b-a)$ and $(d-c)$.
	
	Note that $(b-a)$ is the length (Lebesgue measure) of the cylinder
	$C_{[0;1,1,n_1, \dots, n_k]}$ and $(d-c)$, that of
	$C_{[0;1,n_1,\dots,n_k,1]}$. By elementary properties of continued
	fractions (see for example, Einsiedler and Ward
	\cite{einsiedler2013ergodic}, Chapter 3), we have
	\begin{align}
	\label{eqn:lengths_of_cylinders_two}
	(b-a) &= \frac{1}{q^1_{k+2}(q^1_{k+2}+q^1_{k+1})}\\
	(d-c) &= \frac{1} {q^3_{k+2}
		\left(q^3_{k+2}+q^3_{k+1}\right)}
	\end{align}
	Then, after simplifying the expression, we obtain that
	$a(d-c)>c(b-a)$ if and only if $q^1_{k+2} > q^3_{k+2}$. This is true
	by Lemma \ref{lem:q1_q3}. This establishes the lemma for the case
	when $k$ is even.
	
	When $k$ is odd, the expressions for the Gauss measure are as
	follows.
	\begin{align}
	\label{ineq:measure_odd_k:one}
	\gamma\left(C_{[1,n_1,\dots,n_k,1]}\right)
	=&
	\log_2\left(
	\frac{1+[0;1,n_1,\dots,n_k,1]}{1+[0;1,n_1,\dots,n_k,2]}\right),\\
	\gamma\left(C_{[1,1,n_1,\dots,n_k]}\right)
	=&
	\log_2\left(
	\frac{1+[0;1,1,n_1,\dots,n_k]}{1+[0;1,1,n_1,\dots,n_k+1]}
	\right).
	\end{align}
	Let $a$, $b$, $c$, and $d$ be defined as in the case for even $k$,
	To establish that the first cylinder has greater measure than the
	second, it suffices to show that $b(c-d)>d(a-b)$. We now have the
	following expressions for the lengths of the cylinders.
	\begin{align}
	\label{eqn:lengths_of_cylinders}
	(a-b) &= \frac{1}{q^1_{k+2}(q^1_{k+2}+q^1_{k+1})}\\
	(c-d) &= \frac{1} {q^3_{k+2}
		\left(q^3_{k+2}+q^3_{k+1}\right)}
	\end{align}
	Using (\ref{eqn:q1_q3}), we have the following expression for $b$.
	\begin{align*}
	b 
	= 1 + \frac{p^2_{k+2}}{q^2_{k+2}}
	= 1+\left(\frac{p^1_{k+2}+p^1_{k+1}}
	{q^1_{k+2}+q^1_{k+1}}\right)
	= \frac{q^1_{k+2}+q^1_{k+1}+q^3_{k+1}+q^3_{k}}
	{q^1_{k+2}+q^1_{k+1}}.
	\end{align*}
	Similarly, the variable $d$ has the following expression.
	\begin{align*}
	d 
	= 1+
	\left(\frac{2p^3_{k+1}+p^3_{k}}
	{2q^3_{k+1}+q^3_{k}}\right)
	= \frac{2q^1_{k+2}+q^1_{k+1}}{2q^3_{k+1}+q^3_{k}}.
	\end{align*}
	Then, we have
	\begin{align*}
	b(c-d)
	=&
	\frac{q^1_{k+2}+q^1_{k+1}+q^3_{k+1}+q^3_{k}}
	{q^1_{k+2}+q^1_{k+1}}
	\times
	\frac{1}{q^3_{k+2} (q^3_{k+2}+q^3_{k+1})}\\
	d(a-b)
	=&
	\frac{2q^1_{k+2}+q^1_{k+1}}{2q^3_{k+1}+q^3_k}
	\times
	\frac{1}{q^1_{k+2}(q^1_{k+2}+q^1_{k+1})}
	\end{align*}
	Observing that
	$$2q^3_{k+1}+q^3_k = q^3_{k+2}+q^3_{k+1},$$
	and canceling common terms, we obtain that $b(c-d)>d(a-b)$ if and
	only if
	\begin{align*}
	(q^1_{k+2}+q^1_{k+1}+q^3_{k+1}+q^3_k) q^1_{k+2}
	>
	(2 q^1_{k+2}+q^1_{k+1}) q^3_{k+2}.
	\end{align*}
	Subtracting $(q^3_{k+1}+q^3_k)q^1_{k+2}=q^3_{k+2}q^1_{k+2}$ from
	both sides, the inequality is true if and only if
	\begin{align*}
	(q^1_{k+2}+q^1_{k+1}) q^1_{k+2} > (q^1_{k+2}+q^1_{k+1}) q^3_{k+2},
	\end{align*}
	which is true if and only if $q^1_{k+2} > q^3_{k+2}$. The last
	inequality is true by Lemma \ref{lem:q1_q3}.
\end{proof}

\section*{Acknowledgements}
The authors gratefully acknowledge the support received from the Institute of Mathematical
Sciences, National University of Singapore. This work was partially
carried out during the workshop on ``Equidistribution: Computational
and Probabilistic Aspects'' at the IMS.  The authors wish to thank
Joseph Vandehey, Yann Bugeaud, and William Mance for helpful
suggestions and references to the Postnikov-Piatetskii Shapiro
criterion for continued fractions.

\bibliography{fair001}

\end{document}